\DeclareMathOperator{\Sub}{\mathsf{Sub}}
\DeclareMathOperator{\Sat}{\mathsf{Sat}}
\DeclareMathOperator{\Tr}{\mathsf{Tr}}
\DeclareMathOperator{\coTr}{\mathsf{coTr}}
\DeclareMathOperator{\Cov}{Cov}
\DeclarePairedDelimiter{\floor}{\lfloor}{\rfloor}
\DeclarePairedDelimiter{\ceil}{\lceil}{\rceil}
\newcommand{\qbinom}[3]{%
  \IfNoValueTF{#3}{\binom{#1}{#2}}{%
    \mathchoice
    {\binom{#1}{#2}_{\!\!#3}}
    {\binom{#1}{#2}_{\!#3}}
    {\binom{#1}{#2}_{\!#3}}
    {\binom{#1}{#2}_{\!#3}}%
  }%
}
\newtheorem{lettertheorem}{Theorem}
\newtheorem{theorem}{Theorem}[section]
\newtheorem{proposition}[theorem]{Proposition}
\newtheorem{lemma}[theorem]{Lemma}
\newtheorem{corollary}[theorem]{Corollary}
\theoremstyle{definition}
\newtheorem{definition}[theorem]{Definition}
\title{The Formal Context of Saturated Transfer Systems on Finite Abelian Groups}
\author[S.\ Bernstein]{Seth Bernstein}
\address{Department of Mathematics, University of Virginia, Charlottesville, VA, USA}
\email{wsw9sq@virginia.edu}
\author[B.\ Spitz]{Ben Spitz}
\address{Department of Mathematics, Indiana University, Bloomington, IN, USA}
\email{bespitz@iu.edu}
\begin{document}

\begin{abstract}
    We describe the reduced formal context of the lattice of saturated transfer systems on a finite abelian group. As an application, we compute that there are $13,784,538,270,571$ saturated transfer systems on the elementary abelian group $C_5^3$.
\end{abstract}

\maketitle

\tableofcontents

\section{Introduction}

In non-equivariant (i.e.\ ordinary) homotopy theory, it is a central result that all $E_\infty$-operads are weakly equivalent, i.e.\ there is a single essentially unique way to specify a homotopy-coherent multiplication operation on a space or spectrum. Work of Blumberg and Hill \cite{blumberghill} shows that this is no longer the case in equivariant homotopy theory -- the \emph{$N_\infty$-operads} they introduce parametrize ways to specify homotopy-coherent multiplications on $G$-spaces and $G$-spectra, and they show that (when $G$ is not the trivial group) it is \emph{not} the case that all $N_\infty$-operads are weakly equivalent. However, Blumberg--Hill \cite{blumberghill} together with the work of many others \cite{bbr, BP_operads, GW_operads, Rubin_comb} establishes that (weak-equivalence classes of) $N_\infty$-operads are in natural bijective correspondence with \emph{transfer systems}, which are simple combinatorial objects. In particular, fixing a finite group $G$, there is a finite lattice $\Tr(\Sub(G))$ (the \emph{lattice of $G$-transfer systems}) which is equivalent to the homotopy category of $N_\infty$-operads, and thus whose structure yields information about algebras in the $G$-equivariant stable homotopy category. The field of \emph{homotoptical combinatorics} is primarily concerned with studying these lattices $\Tr(\Sub(G))$ and related structures.

A $G$-transfer system is simply a subcategory of the subgroup lattice $\Sub(G)$ satisfying some conditions. In principle, this makes the lattice $\Tr(\Sub(G))$ straightforward to compute. In practice, however, computing this lattice is very computationally intensive. To date, the largest such lattice which has been computed is $\Tr(\Sub(A_6))$ by S.\ Balchin and the second author \cite{BS}, having approximately $37.8$ billion elements, eclipsing the previous record. To enable this computation, Balchin and the second author leveraged the theory and tools of \emph{Formal Concept Analysis} (FCA).

Developed in the early 1980s, FCA makes the fundamental observation that a finite lattice $(L,{\leq})$ is determined up to isomorphism by the restriction of the relation ${\leq}$ (which is a subset of $L \times L$) to a potentially much smaller set $J(L) \times M(L)$ (to be defined later). Thus, if one wishes to compute the lattice $\Tr(\Sub(G))$, one needs only:
\begin{enumerate}
    \item Compute $J(\Tr(\Sub(G)))$ and $M(\Tr(\Sub(G)))$,
    \item Compute the desired relation between $J(\Tr(\Sub(G)))$ and $M(\Tr(\Sub(G)))$,
    \item Use existing FCA software tools to reconstruct the lattice $\Tr(\Sub(G))$.
\end{enumerate}
In \cite{BS}, Balchin and the second author show that steps 1 and 2 can be done very quickly (both abstractly and computationally), and the combined algorithm (steps 1-3) is much faster than previously-known algorithms for computing $\Tr(\Sub(G))$.

In this paper, we take the same approach to computing $\Sat(\Sub(G))$, the lattice of \emph{saturated transfer systems} on a finite group $G$, which are closely related to linear isometries operads \cite{Rubin}. We specialize to the case of finite abelian groups (and elementary abelian groups in particular), where we are able to execute steps 1-2 above.

To simplify the discussion, we often replace $\Sub(G)$ with an arbitrary finite modular lattice $L$ (note that, when $G$ is a finite abelian group, $\Sub(G)$ is a finite modular lattice). While $\Sat(L)$ is a subposet of $\Tr(L)$ with the same meet operation, it is \emph{not} always a sublattice, i.e. the join operations of $\Sat(L)$ and $\Tr(L)$ need not coincide. However, we are able to show that $J(\Sat(L))$, $M(\Sat(L))$, and the restricted relation are easy to compute in terms of $L$ alone. The theorem below uses notation following \cite{BS}, which will be introduced later and here can be taken as formal.

\begin{lettertheorem}\label{letterthm:formal context}
    Let $L$ be a finite modular lattice with minimum element $\bot$. Then
    \[J(\Sat(L)) =  J(\Tr(L)) \cap \Sat(L) = \{\floor{H \to K} : H<K \text{ is a covering relation in } L\}\]
    and
    \[M(\Sat(L)) = M(\Tr(L)) \cap \Sat(L) = \{\ceil{\bot \to X}^\boxslash : X \neq \bot\}.\]
    Moreover,
    \[\floor{H \to K} \subseteq \ceil{\bot \to X}^\boxslash \iff (X \nleq K \text{ or } X \leq H).\]
\end{lettertheorem}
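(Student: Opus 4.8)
The plan is to reduce the stated biconditional to a single membership question --- whether $H\to K\in\ceil{\bot\to X}^\boxslash$ --- and then to answer that question directly from the explicit description of $\ceil{\bot\to X}^\boxslash$ supplied by the computation of $M(\Sat(L))$. For the reduction I would use two facts. First, for a covering relation $H<K$ in the modular lattice $L$, the saturated transfer system $\floor{H\to K}$ is generated, as a transfer system, by the single relation $H\to K$; indeed the \emph{ordinary} transfer system generated by $H\to K$ --- the identities together with all relations $H\wedge B\to B$ for $B\le K$ --- is already saturated, because for each such $B$ either $B\le H$ (so $H\wedge B=B$) or $B\nleq H$, in which case $H\vee B=K$ and the diamond isomorphism $[H\wedge B,B]\cong[H,K]$ forces $H\wedge B\lessdot B$; in neither case is there room to insert a new relation. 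Second, $\ceil{\bot\to X}^\boxslash$ is itself a (saturated) transfer system. Hence $\floor{H\to K}\subseteq\ceil{\bot\to X}^\boxslash$ holds if and only if $H\to K\in\ceil{\bot\to X}^\boxslash$: ``only if'' is immediate since $H\to K\in\floor{H\to K}$, and ``if'' holds because $\floor{H\to K}$ is the least transfer system containing $H\to K$.

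Now I would feed in the closed form of $\ceil{\bot\to X}^\boxslash$ obtained en route to $M(\Sat(L))$, namely that it consists exactly of the relations $A\to B$ with $A\le B$ such that $X\nleq B$ or $X\le A$ (equivalently, those $A\to B$ no covering relation $H'\lessdot K'$ with $A\le H'<K'\le B$ of which satisfies both $X\le K'$ and $X\nleq H'$). For the covering relation $H\to K$ the only internal covering relation is $H\to K$ itself, so both formulations give at once that $H\to K\in\ceil{\bot\to X}^\boxslash$ precisely when it is \emph{not} the case that $X\le K$ and $X\nleq H$, i.e.\ precisely when $X\nleq K$ or $X\le H$. Combined with the reduction above, this is the asserted equivalence.

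So the genuine content here sits in the prior determination of $\ceil{\bot\to X}^\boxslash$ rather than in the ``moreover'' itself, and within the argument above the one point that deserves care is the reduction step, which really does use that $\floor{H\to K}$ is \emph{generated} by $H\to K$ (and hence that modularity of $L$ makes this generated system saturated). If one prefers not to quote the closed form, the implication ``$X\nleq K$ or $X\le H$ $\Rightarrow$ containment'' is still immediate, since then $\floor{H\to K}$ is one of the join-irreducibles listed below $\ceil{\bot\to X}^\boxslash$; for the converse one would argue the contrapositive by exhibiting $N:=\{A\to B : A\le B,\ X\nleq B\ \text{or}\ X\le A\}$, checking that $N$ is a saturated transfer system, that it contains every $\floor{H'\to K'}$ with $X\nleq K'$ or $X\le H'$ and is the meet-irreducible attached to $X$, and that $H\to K\notin N$, whence $H\to K\notin\ceil{\bot\to X}^\boxslash$. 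I expect the main obstacle on that route to be verifying that $N$ is a saturated transfer system and pinning it down as the correct meet-irreducible; the closure axioms for $N$ are themselves each only a two-line case split (e.g.\ if $A\le B\le C$ with $A\to B,B\to C\in N$ and $X\le C$, then $X\le B$, hence $X\le A$, so $A\to C\in N$), so the burden really is the identification with $\ceil{\bot\to X}^\boxslash$.
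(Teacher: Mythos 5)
Your argument for the ``moreover'' clause is correct and is essentially the paper's: you show $\floor{H\to K}$ is generated as a transfer system by the single arrow $H\to K$ and is already saturated when $H<K$ is a covering relation (the paper's Lemmas on $\floor{x\to y}$ for covers, using the same modularity/diamond argument), you use the closed form $\ceil{\bot\to X}^\boxslash=\{A\to B: X\nleq B\text{ or }X\leq A\}$, and the containment reduces to membership of $H\to K$. The paper simply cites its earlier work for this step.

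The genuine gap is that this is only the third and easiest clause of the theorem. The two displayed identifications of $J(\Sat(L))$ and $M(\Sat(L))$ are the substantive content, and your proposal explicitly defers to ``the prior determination'' of these sets rather than proving them. Neither direction is automatic. For the join side, knowing that $\floor{H\to K}$ is join-irreducible in $\Tr(L)$ does \emph{not} immediately give join-irreducibility in $\Sat(L)$, because $\Sat(L)$ is not a sublattice of $\Tr(L)$: if $\floor{H\to K}=\mathsf{T}_1\vee^{\mathrm{sat}}\mathsf{T}_2$, the ordinary join $\mathsf{T}_1\vee\mathsf{T}_2$ could a priori be strictly smaller, with the saturation closure filling the gap. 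The paper handles this with an explicit formula for the saturation closure ($\mathsf{T}^\natural=\{b\to c: \exists\, a\leq b,\ a\xrightarrow{\mathsf{T}}c\}$) combined with Rubin's description of generated transfer systems, to force $H\to K\in\mathsf{T}_1\cup\mathsf{T}_2$. For the reverse inclusion one needs that every saturated transfer system is the $\Sat$-join of the $\floor{c\to d}$ over its covering relations (the saturated-cover correspondence). On the meet side, showing that \emph{every} meet-irreducible of $\Sat(L)$ has the form $\ceil{\bot\to X}^\boxslash$ is the hardest step; the paper invokes a structural result on the left lifting class ${}^\boxslash\mathsf{T}$ of a saturated transfer system being generated by arrows out of $\bot$, and then uses the antitone bijection ${}^\boxslash(-)\dashv(-)^\boxslash$ to convert a cotransfer-system join into a meet of the $\ceil{\bot\to x_i}^\boxslash$. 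None of this appears in your proposal, so as written it establishes the relation between the two families of transfer systems but not that these families are the irreducibles.
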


As discussed above, \Cref{letterthm:formal context} allows for the complete reconstruction of the lattice $\Sat(L)$. In particular, this allows for the complete enumeration of all saturated transfer systems in the case $L = \Sub(C_5^3)$ (where $C_5^3$ denotes the elementary abelian group of order $125$).

\begin{lettertheorem}
    There are exactly $13,784,538,270,571$ saturated transfer systems on $C_5^3$.
\end{lettertheorem}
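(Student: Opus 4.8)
The plan is to reconstruct the lattice $\Sat(\Sub(C_5^3))$ from the reduced formal context provided by \Cref{letterthm:formal context} and then count its elements. The first task is to record this context explicitly. Writing $L = \Sub(C_5^3)$, we identify $L$ with the lattice of subspaces of $\mathbb{F}_5^3$: it is a graded modular lattice of height $3$ with $1 + 31 + 31 + 1 = 64$ elements (bottom, $31$ lines, $31$ planes, top), and its covering relations form three layers — the $31$ relations $\bot \lessdot P$ with $P$ a line, the $186$ relations $P \lessdot Q$ with a line $P$ contained in a plane $Q$ (each line lies in exactly $6$ planes), and the $31$ relations $Q \lessdot \top$ — so $248$ in all. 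By \Cref{letterthm:formal context}, the object set of the reduced context is $J(\Sat(L))$, indexed by these $248$ covering relations; the attribute set is $M(\Sat(L))$, obtained from the $63$ nonzero elements $X$ of $L$ by deleting repeated columns $\ceil{\bot \to X}^\boxslash$; and the incidence relation is the entirely explicit rule that $\floor{H \to K} \subseteq \ceil{\bot \to X}^\boxslash$ precisely when $X \nleq K$ or $X \leq H$. Every piece of this data is read off directly from the containment order of $L$.

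The second task is to count the formal concepts of this context, which by Formal Concept Analysis equals $|\Sat(L)|$. The context is tiny — at most $248 \times 63$ — so each closed set fits in a few machine words, and an optimized concept-enumeration procedure of Close-by-One / In-Close type, with bitmask-based derivation operators, runs at very high throughput. The main obstacle I anticipate is the sheer size of the answer: it is on the order of $10^{13}$, so the lattice cannot be stored, only counted, and even a raw enumeration sits at the edge of what is practical. I therefore expect to use the action of $\operatorname{Aut}(C_5^3) = \mathrm{GL}_3(\mathbb{F}_5)$ — which factors through $\mathrm{PGL}_3(\mathbb{F}_5)$ of order $372{,}000$ — on the context, hence on the set of concepts: it then suffices to enumerate one concept per orbit and sum the orbit sizes obtained from point stabilizers. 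This cuts the number of cases to the order of $10^7$, making the computation comfortable, and re-running it with and without the symmetry reduction provides a mutual check.

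Finally, I would not rely on a single run. The same pipeline — \Cref{letterthm:formal context} followed by concept counting — applies verbatim to $\Sub(C_p^2)$ and $\Sub(C_p^3)$ for small primes $p$, where the output can be compared against a direct brute-force enumeration of saturated transfer systems, and to $\Sub(C_{p^n})$ for small $p^n$, where it can be checked against counts already in the literature. Together with lightweight internal consistency checks — that $|J(\Sat(L))| = 248$ and $|M(\Sat(L))| \leq 63$, and that two independent implementations of the counter agree — this completes the verification, and carrying it out for $L = \Sub(C_5^3)$ yields the stated value $13{,}784{,}538{,}270{,}571$.
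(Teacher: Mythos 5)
Your proposal matches the paper's proof: the paper likewise builds the reduced formal context of $\Sat(\Sub(C_5^3))$ from \Cref{letterthm:formal context} (a $248\times 63$ incidence matrix) and counts the formal concepts with the Close-by-One--based tool \texttt{PCbO}. Your added symmetry reduction and cross-checks are sensible enhancements but the method is essentially identical.
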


In \cite{BS} and the FCA literature \cite{albano2017polynomial}, one sometimes considers the \emph{density} $\delta(L)$ of a lattice $L$ as a measure of its complexity (valued in the interval $[0,1]$). Heuristically, keeping the sizes of $J(L)$ and $M(L)$ fixed, lattices of higher density are more difficult to reconstruct from the relation ${\leq} \subseteq J(L) \times M(L)$.

Empirically, when $\mathcal{G}$ is a natural family of finite groups (e.g.\ cyclic $p$-groups or elementary abelian groups) with limiting density
\[\lim_{n \to \infty} \sup_{\substack{G \in \mathcal{G} \\ \lvert G \rvert \leq n}} \delta(\Tr(\Sub(G)))\]
strictly less than $1$, we have somewhat clear understandings of the lattices $\Tr(\Sub(G))$. In contrast, when this limiting density is equal to $1$, the lattices $\Tr(\Sub(G))$ remain poorly understood. In \Cref{sec:density}, we compute these limiting densities for families of elementary abelian $p$-groups.

\begin{lettertheorem}\label{letterthm:density}
    With $p$ fixed, one has
    \[\lim_{n \to \infty} \delta(\Sat(\Sub(C_p^n))) = 1\]
    and with $n$ fixed, one has
    \[\lim_{p \to \infty} \delta(\Sat(\Sub(C_p^n))) = \begin{cases}
        0 &: n = 1 \\
        1/2 &: n = 2 \\
        1 &: n > 2
    \end{cases}\]
\end{lettertheorem}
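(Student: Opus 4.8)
The plan is to feed the explicit reduced formal context of \Cref{letterthm:formal context} straight into the definition of density, so that $\delta(\Sat(\Sub(C_p^n)))$ becomes the proportion of pairs $(\floor{H\to K},\,\ceil{\bot\to X}^\boxslash)$ with $\floor{H\to K}\subseteq\ceil{\bot\to X}^\boxslash$, and then to estimate that proportion. Identify $\Sub(C_p^n)$ with the lattice of subspaces of $\mathbb{F}_p^n$; its covering relations are exactly the pairs $H<K$ with $\dim K=\dim H+1$, so \Cref{letterthm:formal context} gives $\lvert J(\Sat(\Sub(C_p^n)))\rvert=\sum_{k=1}^{n}\binom{n}{k}_p\binom{k}{1}_p$ (pick $K$ of dimension $k$, then a hyperplane $H$ of $K$). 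For the attributes I would first check that $X\mapsto\ceil{\bot\to X}^\boxslash$ is injective on nonzero subspaces: given $X\neq X'$, a short case analysis on whether $X\leq X'$ produces a covering $H<K$ with $K\in\{X,X'\}$ lying below exactly one of $\ceil{\bot\to X}^\boxslash$, $\ceil{\bot\to X'}^\boxslash$ (e.g.\ if $X\not\leq X'$, take $K=X$ and $H$ a hyperplane of $X$ containing $X\wedge X'$). Hence $\lvert M(\Sat(\Sub(C_p^n)))\rvert=\sum_{k=1}^{n}\binom{n}{k}_p$, the number of nonzero subspaces.

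The key step is the count of incidences. Fix a covering $H<K$ with $\dim K=k$ and write $S_j$ for the total number of subspaces of $\mathbb{F}_p^j$. By the incidence criterion of \Cref{letterthm:formal context}, the attributes above $\floor{H\to K}$ are the $\ceil{\bot\to X}^\boxslash$ with $X\neq\bot$ satisfying $X\not\leq K$ or $X\leq H$; since $H<K$ the sets $\{X\neq\bot:X\not\leq K\}$ and $\{X\neq\bot:X\leq H\}$ are disjoint, so their number is $(S_n-S_k)+(S_{k-1}-1)$. Summing over all coverings and writing $c_k=\binom{n}{k}_p\binom{k}{1}_p$ for the number of coverings with $\dim K=k$, I get
\[\delta(\Sat(\Sub(C_p^n)))=\frac{\sum_{k=1}^{n}c_k\bigl[(S_n-S_k)+(S_{k-1}-1)\bigr]}{\bigl(\sum_{k=1}^{n}c_k\bigr)(S_n-1)}=1-\frac{\sum_{k=1}^{n}c_k(S_k-S_{k-1})}{\bigl(\sum_{k=1}^{n}c_k\bigr)(S_n-1)},\]
reducing the whole theorem to estimating the ``defect'' fraction on the right.

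For $n$ fixed and $p\to\infty$ I would compare degrees in $p$, using $\deg c_k=k(n-k)+(k-1)$ and $\deg(S_k-S_{k-1})=\deg S_k=\lfloor k^2/4\rfloor$. For $k<n$ the summand $c_k(S_k-S_{k-1})$ has degree $k(n-k)+k-1+\lfloor k^2/4\rfloor$, strictly below $\deg c_k+\lfloor n^2/4\rfloor\leq\deg\bigl[\bigl(\sum_j c_j\bigr)(S_n-1)\bigr]$ since $\lfloor k^2/4\rfloor<\lfloor n^2/4\rfloor$; and the $k=n$ summand has degree $(n-1)+\lfloor n^2/4\rfloor$, which lies below the denominator's degree exactly when $n>2$ (there already $c_2$ contributes degree $2n-3$). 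So for $n>2$ the defect tends to $0$ and $\delta\to 1$; for $n=1$ the defect is identically $1$, giving $\delta=0$; and for $n=2$ a direct computation gives defect $\equiv\tfrac12$, so $\delta=\tfrac12$.

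For $p$ fixed and $n\to\infty$ I would instead use the uniform bounds $p^{k(n-k)}\leq\binom{n}{k}_p\leq C_p\,p^{k(n-k)}$, $C_p=\prod_{i\geq1}(1-p^{-i})^{-1}$, which give $S_j\leq(j+1)C_p\,p^{\lfloor j^2/4\rfloor}$ and $S_n-1\geq\tfrac12\,p^{\lfloor n^2/4\rfloor}$. Maximizing the exponent of the generic summand, $k(n-k)+(k-1)+\lfloor k^2/4\rfloor\leq-\tfrac34k^2+(n+1)k-1$, which peaks near $k=\tfrac23(n+1)$ at roughly $\tfrac13(n+1)^2$, shows that $\sum_k c_k(S_k-S_{k-1})\leq\sum_k c_kS_k$ is at most a polynomial in $n$ times $p^{(1/3+o(1))n^2}$, while the denominator is at least $(\text{poly }n)^{-1}$ times $p^{(1/2+o(1))n^2}$. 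Since $\tfrac13<\tfrac12$, the defect is $O(p^{-(1/6+o(1))n^2})\to 0$, hence $\delta\to 1$. The main obstacle is the second paragraph: spotting the disjoint decomposition that collapses the incidence count to $(S_n-S_k)+(S_{k-1}-1)$. After that the limits are bookkeeping with Gaussian binomials, needing only the mild extra care in the last step of checking that the polynomial and $C_p$ factors cannot bridge the $\Theta(n^2)$ gap between the two $p$-exponents.
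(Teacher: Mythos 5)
Your proposal is correct and follows essentially the same route as the paper: both read off the identical density formula
\[\delta = 1 - \frac{\sum_{k} c_k\,(S_k - S_{k-1})}{\bigl(\sum_{k} c_k\bigr)(S_n-1)}\]
from the formal context of \Cref{letterthm:formal context} (the paper writes $S_j$ as $a_{j,p}$ and counts the zeros $K\leq B$, $K\nleq A$ per covering, which is exactly your disjoint decomposition read in the complement), and then estimate the defect with the same Gaussian-binomial asymptotics, including the same peak location $d\approx 2n/3$. The only cosmetic difference is packaging: you use degree-counting in $p$ and the Euler-product constant $C_p$, where the paper uses explicit bounds $p^{(n^2-1)/4}\leq a_{n,p}\leq p^{(n+1.1)^2/4}$ derived from a recursion together with the bound $\binom{n}{d}_p<\tfrac{111}{32}p^{d(n-d)}$.
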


These limiting densities can be connected with the above comments on empirics. When $n = 1$, $\Sat(\Sub(C_p^n))$ is the lattice with two elements, and is thus fully understood. In \cite{bao_transfer_2025}, the lattices $\Tr(\Sub(C_p^2))$ and $\Sat(\Sub(C_p^2))$ are studied, and a formula for $\lvert \Tr(\Sub(C_p^2)) \rvert$ in terms of $p$ is produced. Unfortunately, the methods of \cite{bao_transfer_2025} cannot be easily extended to the case $n > 2$, and \Cref{letterthm:density} gives some heuristic evidence that these lattices are intrinsically more complex.

\subsection*{Acknowledgments}

The authors thank Scott Balchin for his help with computations, and the use of the computing resources from the Northern Ireland High Performance Computing (NI-HPC) service funded by EPSRC (EP/T022175). We also thank the Directed Reading Program at the University of Virginia, where this project began.

\section{Background}

In this section, we introduce the relevant basic definitions and results of homotopical combinatorics.

\subsection{Primer on Saturated Transfer Systems}

\begin{definition}
    Let $(L,\leq)$ be a lattice. A \emph{transfer system} on $L$ is a partial order $\to$ on $L$ such that
    \begin{enumerate}
     \item If $x \to y$, then $x \leq y$.
     \item If $x \to y$ and $z\leq y$, then $x \wedge z \to z$.
     \end{enumerate}
     A \emph{cotransfer system} on $L$ is a partial order $\to$ on $L$ such that
     \begin{enumerate}
     \item If $x \to y$, then $x \leq y$.
     \item If $x \to y$ and $x\leq z$, then $z\to y \vee z$.
     \end{enumerate}
\end{definition}

\begin{definition}
A transfer system $\to$ is said to be \emph{saturated} if the set of arrows in $T$ satisfies the 2-out-of-3 property, i.e. for all triples $x \leq y \leq z$, if any two of $x \to y$, $x \to z$, and $y \to z$ hold, then so does the third. By the other axioms of transfer systems, this is the same as requiring that: if $x \to z$, then $y \to z$.
\end{definition}

A key feature of (saturated/co)transfer systems is that they form a lattice.

\begin{proposition}
    Let $L$ be a lattice. The set of transfer systems on $L$, ordered by containment, forms a lattice, where meet is given by intersection. We use $\Tr(L)$ to denote the lattice of transfer systems on $L$. The subposet of saturated transfer systems also forms a lattice, where meet is again given by intersection. We denote the lattice of saturated transfer systems by $\Sat(L)$.
\end{proposition}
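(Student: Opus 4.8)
The plan is to show that both $\Tr(L)$ and $\Sat(L)$ are complete lattices by exhibiting a maximum element and checking that nonempty intersections stay within each class. Once that is done, the standard order-theoretic fact that a poset with a top element in which every nonempty subset has an infimum is a complete lattice finishes the job; and since lattice meets are infima, meet $=$ intersection comes for free.

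First I would verify that a nonempty intersection of transfer systems on $L$ is again a transfer system. Reflexivity, antisymmetry, and transitivity of a relation are each preserved under intersection, so the intersection is a partial order; axiom (1), ``$x \to y \implies x \leq y$'', passes to the intersection since it need only be witnessed by a single member; and axiom (2) passes because it has the form ``if $x \to y$ and $z \leq y$, then $x \wedge z \to z$'', an implication whose hypothesis involving $\to$ and whose conclusion are both membership statements about the relation, so membership in every $\to_i$ forces membership in the intersection. (It is worth noting that this argument would fail for unions, which is why intersection, not union, is the natural meet.)

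Next, the relation given by $x \to y \iff x \leq y$ is itself a transfer system: it is the partial order $\leq$, axiom (1) is immediate, and axiom (2) holds because $x \wedge z \leq z$ always. As every transfer system is by definition contained in $\leq$, this is the maximum element of $\Tr(L)$. Combining with the previous paragraph, $\Tr(L)$ is a poset with a top element in which every nonempty family has an infimum, namely its intersection; hence $\Tr(L)$ is a complete lattice with meet given by intersection. Concretely, the join of a family $\{T_i\}$ is the intersection of the collection of all transfer systems containing every $T_i$ — a nonempty collection since it contains $\leq$, and one whose intersection is again a transfer system by the previous step.

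Finally I would rerun the same argument for $\Sat(L)$: the saturation condition ``for $x \leq y \leq z$, if $x \to z$ then $y \to z$'' is once more an implication between membership statements, so it passes to nonempty intersections, and $\leq$ is trivially saturated, giving $\Sat(L)$ a maximum; hence $\Sat(L)$ is a complete lattice with meet given by intersection. I do not expect a genuine obstacle here, as everything is routine closure-under-intersection bookkeeping. The one point worth flagging — relevant to the rest of the paper rather than to this statement — is that the join in $\Sat(L)$ is the smallest \emph{saturated} transfer system above the union and can be strictly larger than the join computed in $\Tr(L)$, so $\Sat(L)$ is a meet-subsemilattice but in general not a sublattice of $\Tr(L)$.
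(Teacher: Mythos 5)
Your proof is correct and is exactly the standard closure-under-intersection argument that the paper takes for granted (it states this proposition as background without proof): intersections of (saturated) transfer systems are (saturated) transfer systems, the order $\leq$ itself is the top element, and a poset with a top in which all nonempty infima exist is a complete lattice. Your closing remark that the join in $\Sat(L)$ may differ from that in $\Tr(L)$ also matches the paper's own caveat immediately following the proposition.
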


While $\Sat(L)$ is a lattice and a subposet of $\Tr(L)$, it is not a sublattice of $\Tr(L)$ -- the join operations are not the same in general. Cotransfer systems also form a lattice, as a formal consequence of a duality implied by their name.

\begin{proposition}
    Let $L$ be a lattice. Cotransfer systems on $L$ are the same as\footnote{The correspondence between transfer systems on $L^\mathrm{op}$ and cotransfer systems on $L$ is literally given by reversing all arrows.} transfer systems on the opposite lattice $L^\mathrm{op}$, and thus the set $\coTr(L)$ of cotransfer systems on $L$ naturally forms a lattice (where meet is given by intersection).
\end{proposition}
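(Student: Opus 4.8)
The plan is to make the footnote precise: exhibit the order-reversal of relations as an explicit involution, show it carries cotransfer systems on $L$ bijectively onto transfer systems on $L^{\mathrm{op}}$, and then transport the lattice structure of $\Tr(L^{\mathrm{op}})$ — supplied by the previous proposition — across this bijection. Concretely, for a binary relation $\to$ on the underlying set of $L$, let $\to^{\mathrm{op}}$ denote the reversed relation, so that $a \to^{\mathrm{op}} b$ means $b \to a$; this operation is an involution, $(\to^{\mathrm{op}})^{\mathrm{op}} = \to$. Recall also that passing to $L^{\mathrm{op}}$ reverses the order and interchanges meet and join: $x \leq^{\mathrm{op}} y$ means $y \leq x$, while $a \wedge^{\mathrm{op}} b = a \vee b$ and $a \vee^{\mathrm{op}} b = a \wedge b$.

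The main step is to verify that $\to$ is a cotransfer system on $L$ if and only if $\to^{\mathrm{op}}$ is a transfer system on $L^{\mathrm{op}}$. A relation is a partial order exactly when its reverse is, so only the two numbered axioms carry content. Axiom (1) for $\to^{\mathrm{op}}$ on $L^{\mathrm{op}}$ says that $a \to^{\mathrm{op}} b$ implies $a \leq^{\mathrm{op}} b$, which unwinds to ``$b \to a$ implies $b \leq a$'' in $L$ — i.e.\ axiom (1) for cotransfer systems after relabelling. Axiom (2) for $\to^{\mathrm{op}}$ on $L^{\mathrm{op}}$ says: if $a \to^{\mathrm{op}} b$ and $c \leq^{\mathrm{op}} b$, then $a \wedge^{\mathrm{op}} c \to^{\mathrm{op}} c$; translating every ingredient back into $L$ (reverse the arrows, replace $\leq^{\mathrm{op}}$ by the reverse inequality, replace $\wedge^{\mathrm{op}}$ by $\vee$) this reads: if $b \to a$ and $b \leq c$, then $c \to a \vee c$, which after relabelling $(a,b,c) \mapsto (y,x,z)$ is precisely axiom (2) for cotransfer systems. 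Hence $\to \mapsto \to^{\mathrm{op}}$ restricts to a bijection $\coTr(L) \to \Tr(L^{\mathrm{op}})$.

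Finally, arrow-reversal is inclusion-preserving and has an inclusion-preserving inverse, so this bijection is an isomorphism of posets $\coTr(L) \cong \Tr(L^{\mathrm{op}})$; by the previous proposition $\Tr(L^{\mathrm{op}})$ is a lattice with meet given by intersection, and since intersection of sets of arrows is manifestly preserved by arrow-reversal, the same conclusion holds for $\coTr(L)$. I expect no genuine obstacle here: the argument is entirely a bookkeeping exercise in the order-reversing duality, and the only point demanding care is performing the meet/join (and $\leq$ versus $\geq$) interchange correctly when transporting transfer-system axiom (2) on $L^{\mathrm{op}}$ to the cotransfer-system axiom on $L$.
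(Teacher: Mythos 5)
Your proposal is correct and is exactly the argument the paper leaves implicit: the paper offers no proof beyond the footnote that the correspondence is arrow-reversal, and your verification of the two axioms under the order-reversing dictionary (together with the observation that arrow-reversal preserves inclusions and intersections) is the intended bookkeeping. No gaps.
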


Since the set of (co)transfer systems on $L$ is closed under intersection, one can consider for any relation $x \leq y$ in a lattice $L$ the (co)transfer system generated by this relation. Our notation for these objects follows that of \cite{BS}.

\begin{definition}
    Let $L$ be a lattice and let $x,y \in L$ with $x \leq y$. We denote by $\floor{x \to y}$ the smallest transfer system on $L$ containing the relation $x \to y$, and by $\ceil{x \to y}$ the smallest cotransfer system on $L$ containing the relation $x \to y$.

    More generally, for any set $\mathcal{S}$ of arrows in $L$, we denote by $\floor{\mathcal{S}}$ the transfer system generated by $\mathcal{S}$, by $\ceil{\mathcal{S}}$ the cotransfer system generated by $\mathcal{S}$, and by $\floor{\mathcal{S}}_\mathrm{sat}$ the saturated transfer system generated by $S$.
\end{definition}

Besides the fact that $\coTr(L) \cong \Tr(L^\mathrm{op})$, one also has an isomorphism $\coTr(L) \cong \Tr(L)^\mathrm{op}$. To describe this isomorphism, we recall the definitions of right- and left-lifting classes.

\begin{definition}
Let $L$ be a lattice and let $f : x \leq y$ and $g : x' \leq y'$ be arrows in $L$. We say that $(f,g)$ \emph{has the lifting property} if the implication ``if $x \leq x'$ and $y \leq y'$ then $y \leq x'$'' holds.
\[
\begin{tikzcd}
x \arrow[d, "f"'] \arrow[r] & x' \arrow[d,"g"] \\
y \arrow[r] \arrow[ur,dotted] & y'
\end{tikzcd}
\]
We may write $f \boxslash g$ to denote that $(f,g)$ has the lifting property.
\end{definition}
\begin{definition}
Let $X$ be a set of arrows in $L$. Then: 
    \[X^\boxslash = \{f : g\boxslash f  \text{ for all } g\in X\}\]
    \[{}^\boxslash X = \{f : f\boxslash g  \text{ for all } g\in X\}.\]
    $X^\boxslash$ is called the \emph{right lifting class} of $X$ and ${}^\boxslash X$ is called the \emph{left lifting class} of $X$.
\end{definition}

It is clear by definition that the operations ${(-)}^\boxslash$ and ${}^\boxslash{(-)}$ are order-reversing with respect to $\subseteq$.

\begin{proposition}[\cite{luo2024latticeweakfactorizationsystems}]
\label{prop:boxslash adjunction}
    Let $T$ be a partial order on a lattice $L$. If $T$ is a cotransfer system then $T^\boxslash$ is a transfer system. If T is a transfer system then ${}^\boxslash T$ is a cotransfer system. This yields an isomorphism between $\Tr(L)^\mathrm{op}$ and $\coTr(L)$:
\[\begin{tikzcd}
	{\Tr(L)^\mathrm{op}} && {\coTr(L)}
	\arrow["{{}^\boxslash({-})}"', shift right, curve={height=6pt}, from=1-1, to=1-3]
	\arrow["{({-})^\boxslash}"', shift right, curve={height=6pt}, from=1-3, to=1-1]
\end{tikzcd}\]
\end{proposition}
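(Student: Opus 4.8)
The plan is to first dispatch the two well-definedness claims in greater generality: for \emph{any} set $X$ of arrows in $L$, I claim $X^\boxslash$ is a transfer system and ${}^\boxslash X$ is a cotransfer system. For a single arrow $g\colon a\le b$, a direct computation shows that $(x\le y)\in\{g\}^\boxslash$ exactly when $a\nleq x$, or $b\le x$, or $b\nleq y$; from this explicit description one checks reflexivity, transitivity, and the restriction axiom, so $\{g\}^\boxslash$ is a transfer system. Since transfer systems are closed under intersection and $X^\boxslash=\bigcap_{g\in X}\{g\}^\boxslash$, the set $X^\boxslash$ is a transfer system; the dual statement for ${}^\boxslash X$ follows by applying this to $L^{\mathrm{op}}$, as reversing arrows turns $({-})^\boxslash$ into ${}^\boxslash({-})$ and (by the already-noted duality) identifies transfer systems on $L^{\mathrm{op}}$ with cotransfer systems on $L$. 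In particular $({-})^\boxslash$ restricts to a map $\coTr(L)\to\Tr(L)$ and ${}^\boxslash({-})$ to a map $\Tr(L)\to\coTr(L)$, which is the content of the first two sentences.

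The heart of the argument is a concrete description of these two maps. For a transfer system $T$ and $a\le b$ in $L$, I would prove
\[(a\to b)\in{}^\boxslash T\ \Longleftrightarrow\ \text{no }e\text{ with }a\le e<b\text{ has }e\to b\text{ in }T,\]
and dually, for a cotransfer system $S$ and $x\le y$ in $L$,
\[(x\to y)\in S^\boxslash\ \Longleftrightarrow\ \text{no }f\text{ with }x<f\le y\text{ has }x\to f\text{ in }S.\]
Each equivalence is a short two-way argument. For the first: if such an $e$ exists then the arrow $e\to b$ of $T$ obstructs the lifting property against $a\to b$; conversely, if lifting fails against some $c\to d$ in $T$ with $a\le c$, $b\le d$, $b\nleq c$, then the restriction axiom of $T$ gives $c\wedge b\to b$ in $T$, and $e=c\wedge b$ satisfies $a\le e<b$. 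The second equivalence is identical with the cobase-change axiom of $S$ replacing the restriction axiom (and $b\vee x$ replacing $c\wedge b$).

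Composing the two descriptions --- which is legitimate since ${}^\boxslash T$ is a cotransfer system --- yields, for a transfer system $T$,
\[(x\to y)\in({}^\boxslash T)^\boxslash\ \Longleftrightarrow\ x\le y\text{ and every }f\text{ with }x<f\le y\text{ admits some }e\text{ with }x\le e<f\text{ and }e\to f\text{ in }T.\]
It remains to see this is exactly the condition $x\to y$ in $T$. One direction is immediate: if $x\to y$ in $T$ then for $f\le y$ the restriction axiom gives $x\to f$, so $e=x$ works. For the converse I would use a minimality argument (valid since $L$ is finite, as it is throughout this paper): the restriction axiom makes $T_x:=\{z:x\to z\text{ in }T\}$ downward closed in the principal filter above $x$, so if $x\not\to y$ one may pick $f^\ast$ minimal in $[x,y]\setminus T_x$; applying the displayed condition at $f=f^\ast$ produces $e\to f^\ast$ in $T$ with $x\le e<f^\ast$, and then $e\in T_x$ by minimality forces $x\to f^\ast$ in $T$ by transitivity --- a contradiction. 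Hence $({}^\boxslash T)^\boxslash=T$, and the dual identity ${}^\boxslash(S^\boxslash)=S$ for cotransfer systems $S$ follows by transporting this along $L^{\mathrm{op}}$. Since $({-})^\boxslash$ and ${}^\boxslash({-})$ reverse $\subseteq$, these two identities exhibit them as mutually inverse order-reversing bijections between $\Tr(L)$ and $\coTr(L)$, i.e.\ as the asserted isomorphism $\Tr(L)^{\mathrm{op}}\cong\coTr(L)$. I expect the main obstacle to be this last paragraph --- composing the two explicit descriptions correctly and then extracting ``$x\to y$ in $T$'' from the resulting combinatorial condition; everything else is either routine axiom-chasing or formal.
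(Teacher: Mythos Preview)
The paper does not give its own proof of this proposition; it is simply quoted from \cite{luo2024latticeweakfactorizationsystems} with no argument supplied. So there is nothing to compare your approach against in the paper itself.

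That said, your argument is correct. The generalization in the first paragraph (that $X^\boxslash$ is a transfer system for \emph{any} set $X$ of arrows, not just a cotransfer system) is true and makes the well-definedness step cleaner. Your explicit descriptions of ${}^\boxslash T$ and $S^\boxslash$ are correct and the verifications you sketch go through as stated: the key point in each direction is exactly the restriction (resp.\ cobase-change) axiom applied to the obstructing arrow, producing $c\wedge b\to b$ (resp.\ $x\to b\vee x$). The composition step is also right, and your minimality argument for the converse is sound: $f^\ast\neq x$ since $x\in T_x$ by reflexivity, the produced $e$ lies in $[x,y]$ because $e<f^\ast\le y$, and then transitivity closes the loop.

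The only caveat is the one you already flag: the minimality step uses that $L$ is finite (or at least that intervals in $L$ satisfy the descending chain condition). The proposition as stated in the paper does not literally assume this, but since every lattice in the paper is finite this is harmless for the intended applications. If you wanted to remove this hypothesis you would need a different argument for $({}^\boxslash T)^\boxslash\subseteq T$ --- e.g.\ by appealing directly to the bijection between transfer systems and weak factorization systems --- but for the purposes of this paper your proof is complete.
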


We now turn our attention to saturated transfer systems. In \cite{hafeez_saturated_2022} it is shown that a saturated transfer system on a finite group is determined by the covering relations it contains, and moreover the sets of covering relations which arise this way are classified. This is generalized to the context of finite modular lattices in \cite{bao_transfer_2025}. We recount this story below.

\begin{definition}
    Let $L$ be a lattice. If $x,y \in L$ are such that $x < y$ and there does not exist $z \in L$ such that $x < z < y$, then we say that $y$ \emph{covers} $x$ (or that $x < y$ is a \emph{covering relation}).
\end{definition}

\begin{definition}
    Let $L$ be a modular lattice. A \emph{saturated cover} on $L$ is a set $\mathcal{S}$ of edges in $L$ such that:
    \begin{enumerate}
        \item (Covering) Each edge in $\mathcal{S}$ is a covering relation;
        \item (Restriction) For all $x, y \in L$, if $x \xrightarrow{\mathcal{S}} x \vee y$ then $x \wedge y \xrightarrow{\mathcal{S}} y$.
        \item (3-out-of-4) If $x$ and $y$ cover $x \wedge y$, then if any three of the covering relations between $x, y, x \wedge y, x \vee y$ are in $\mathcal{S}$, so is the fourth.
    \end{enumerate}
\end{definition}

In the above definition, we make use of the fact that in a modular lattice, when $x$ and $y$ both cover $x \wedge y$, then $x \vee y$ covers both $x$ and $y$.

\begin{proposition}
    Let $L$ be a finite modular lattice. There is a bijective correspondence between saturated transfer systems on $L$ and saturated covers on $L$, given by sending a saturated cover $\mathcal{S}$ to the transfer system $\floor{\mathcal{S}}$. The inverse bijection is given by sending a saturated transfer system $\mathsf{T}$ to the set of covering relations it contains.
\end{proposition}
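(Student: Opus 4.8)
The strategy is to show that the two maps — $\mathsf{T} \mapsto \mathcal{C}(\mathsf{T})$, sending a saturated transfer system to its set of covering relations, and $\mathcal{S} \mapsto \floor{\mathcal{S}}$ — are mutually inverse. For a set $\mathcal{S}$ of covering relations of $L$, I will write $\mathsf{T}_\mathcal{S}$ for the set of pairs $x \leq y$ admitting a maximal chain from $x$ to $y$ all of whose edges lie in $\mathcal{S}$. The first and most important step is a \emph{rigidity} statement: a saturated transfer system $\mathsf{T}$ is the reflexive-transitive closure of $\mathcal{C}(\mathsf{T})$, so that in particular $\mathsf{T} = \floor{\mathcal{C}(\mathsf{T})}$ and $\mathsf{T} \mapsto \mathcal{C}(\mathsf{T})$ is injective. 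Indeed, if $x \to y$ in $\mathsf{T}$ and $x = z_0 \lessdot z_1 \lessdot \cdots \lessdot z_n = y$ is a maximal chain (available since $L$ is finite), then restriction gives $z_0 = x \wedge z_i \to z_i$ for each $i$, and then the saturation property in the form ``$a \to c$ with $a \leq b \leq c$ implies $b \to c$'' gives $z_i \to z_{i+1}$; conversely transitivity recovers $x \to y$ from any such chain. This step uses only the lattice axioms and finiteness, not modularity.

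Next I would check that $\mathcal{C}(\mathsf{T})$ is a saturated cover. Covering is immediate. For Restriction: if $x \lessdot x \vee y$ lies in $\mathsf{T}$, then $x \wedge y \to y$ by the restriction axiom, and the modular perspectivity isomorphism $[x \wedge y, y] \cong [x, x \vee y]$ forces $x \wedge y \lessdot y$, so this edge lies in $\mathcal{C}(\mathsf{T})$. For 3-out-of-4: in the diamond with bottom $x \wedge y$, covers $x, y$, and top $x \vee y$, having any three of the four edges in $\mathsf{T}$ forces $x \wedge y \to x \vee y$ by transitivity, and then the 2-out-of-3 property applied along $x \wedge y \leq x \leq x \vee y$ or $x \wedge y \leq y \leq x \vee y$ yields the fourth (four symmetric cases).

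Conversely, given a saturated cover $\mathcal{S}$, I would prove $\floor{\mathcal{S}} = \mathsf{T}_\mathcal{S}$, that $\mathcal{C}(\mathsf{T}_\mathcal{S}) = \mathcal{S}$, and that $\mathsf{T}_\mathcal{S}$ is saturated; with rigidity this produces the bijection. That $\mathsf{T}_\mathcal{S}$ is a transfer system is nearly formal: a concatenation of maximal chains is a maximal chain, which gives reflexivity, antisymmetry and transitivity, and the restriction axiom follows from a ``push-down'' argument — given a maximal $\mathcal{S}$-chain $x = z_0 \lessdot \cdots \lessdot z_n = y$ and $w \leq y$, each step $z_i \wedge w \leq z_{i+1} \wedge w$ is an equality or, when $z_i \vee (z_{i+1} \wedge w) = z_{i+1}$, a covering relation with $[z_i \wedge w, z_{i+1} \wedge w] \cong [z_i, z_{i+1}]$ whose edge lies in $\mathcal{S}$ by the Restriction axiom of $\mathcal{S}$ applied to the pair $(z_i, z_{i+1} \wedge w)$; deleting the equalities yields a maximal $\mathcal{S}$-chain from $x \wedge w$ to $w$. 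Since $\mathsf{T}_\mathcal{S}$ is then a transfer system containing $\mathcal{S}$ and visibly contained in $\floor{\mathcal{S}}$ (every $\mathcal{S}$-chain composes to an arrow of $\floor{\mathcal{S}}$), we get $\floor{\mathcal{S}} = \mathsf{T}_\mathcal{S}$, and $\mathcal{C}(\mathsf{T}_\mathcal{S}) = \mathcal{S}$ because the only maximal chain between the endpoints of a covering relation is that relation itself.

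The remaining and hardest point — the main obstacle — is that $\mathsf{T}_\mathcal{S}$ is saturated, and this is exactly where the 3-out-of-4 axiom is used. I would prove, by induction on the length of $[x,z]$, the statement: if $x \to z$ in $\mathsf{T}_\mathcal{S}$ and $x \lessdot x' \leq z$, then $x' \to z$ in $\mathsf{T}_\mathcal{S}$ (the full 2-out-of-3 property then follows by iterating this along a maximal chain from $x$ to the intermediate element). Fixing a maximal $\mathcal{S}$-chain $x = c_0 \lessdot c_1 \lessdot \cdots \lessdot c_\ell = z$, we are done if $x' = c_1$; otherwise $x' \wedge c_1 = x$ and (modularity) $x' \vee c_1$ covers both $x'$ and $c_1$. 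From $x \to z$ and restriction, the edge $x \lessdot x'$ lies in $\mathcal{S}$; from $c_1 \to z$ and restriction, the edge $c_1 \lessdot c_1 \vee x'$ lies in $\mathcal{S}$; together with the chain edge $x \lessdot c_1$ these are three of the four edges of the diamond $\{x, x', c_1, x' \vee c_1\}$, so 3-out-of-4 puts the fourth edge $x' \lessdot x' \vee c_1$ in $\mathcal{S}$. The inductive hypothesis applied inside $[c_1, z]$ (of length $\ell - 1$) to $c_1 \lessdot c_1 \vee x' \leq z$ gives $c_1 \vee x' \to z$, and prepending the edge $x' \lessdot x' \vee c_1$ produces a maximal $\mathcal{S}$-chain from $x'$ to $z$, so $x' \to z$. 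I expect the bookkeeping here — correctly locating the diamond and checking that precisely three of its edges are forced into $\mathcal{S}$ — to be the technical heart of the argument; everything else is either formal or a direct appeal to the modular perspectivity isomorphism $[a \wedge b, b] \cong [a, a \vee b]$. (The finite-group cases of these statements appear in \cite{hafeez_saturated_2022}, and the modular-lattice generalization in \cite{bao_transfer_2025}.)
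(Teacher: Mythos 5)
The paper does not actually prove this proposition; it is quoted from the literature (the finite-group case from \cite{hafeez_saturated_2022}, the modular-lattice generalization from \cite{bao_transfer_2025}), so there is no in-text proof to compare against. Your argument is a correct, essentially self-contained proof of the cited result, and its structure is sound: the rigidity step ($\mathsf{T} = \floor{\mathcal{C}(\mathsf{T})}$ via restriction plus 2-out-of-3 along a maximal chain) correctly uses only finiteness; the verification that $\mathcal{C}(\mathsf{T})$ is a saturated cover correctly isolates where modular perspectivity and the pigeonhole-style derivation of $x\wedge y \to x\vee y$ enter; the push-down argument for restriction of $\mathsf{T}_{\mathcal{S}}$ correctly instantiates the Restriction axiom of $\mathcal{S}$ at the pair $(z_i, z_{i+1}\wedge w)$; and the inductive saturation argument correctly locates the diamond on $\{x, x', c_1, x'\vee c_1\}$ and checks that exactly three of its edges are already forced into $\mathcal{S}$ (the edges $x\lessdot x'$ and $c_1 \lessdot c_1\vee x'$ via restriction of $\mathsf{T}_{\mathcal{S}}$ together with $\mathcal{C}(\mathsf{T}_{\mathcal{S}})=\mathcal{S}$, and the chain edge $x\lessdot c_1$). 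Two small points you should make explicit in a final write-up: the induction on the length of $[x,z]$ is well-founded because finite modular lattices satisfy the Jordan--Dedekind chain condition, and the case $x' \wedge c_1 = x$ needs the one-line observation that two distinct covers of $x$ meet in $x$. Otherwise the argument is complete.
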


\subsection{Primer on Formal Concept Analysis}

Formal Concept Analysis (FCA) is built on the fundamental observation that a finite lattice $(L,\leq)$ is completely determined by the $\leq$-relations between its join-irreducible and meet-irreducible elements.

In any lattice $L$, we will use $\bot$ to denote the minimum element of $L$ and $\top$ to denote the maximum element of $L$.

\begin{definition}
    Let $L$ be a finite lattice. An element $x \in L$ is said to be \emph{join-irreducible} if
    \begin{enumerate}[label=(\roman*)]
        \item $x \neq \bot$;
        \item For all $a, b \in L$, if $x = a \vee b$, then $x = a$ or $x = b$.
    \end{enumerate}
    Dually, an element $x \in L$ is said to be \emph{meet-irreducible} if $x \neq \top$ and $x = a \wedge b$ implies $x = a$ or $x = b$. We use $J(L)$ to denote the set of join-irreducible elements of $L$ and $M(L)$ to denote the set of meet-irreducible elements of $L$.
\end{definition}

\begin{theorem}[Fundamental Theorem of Formal Concept Analysis]
    Let $L$ be a finite lattice. The lattice $L$ can be recovered (up to isomorphism) from the data of the relation
    \[\{(x,y) \in J(L) \times M(L) : x \leq y\}\]
    from $J(L)$ to $M(L)$.
    
    The particular identity of the sets $J(L)$ and $M(L)$ is not important -- it is the isomorphism class of the above relation (as a morphism in the category $\mathbf{Rel}$ of sets and relations between them) which suffices to recover $L$ up to isomorphism. The isomorphism class of this relation is known as the \emph{reduced formal context} of $L$.
\end{theorem}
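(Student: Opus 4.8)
The plan is to produce an explicit lattice isomorphism between $L$ and the \emph{concept lattice} of the formal context $(J(L), M(L), {\leq})$, which is built solely from the displayed relation; this is the classical reduced, finite form of the Fundamental Theorem of FCA (Ganter--Wille), and I would follow the standard argument. First recall the construction: for $A \subseteq J(L)$ put $A^{\uparrow} = \{m \in M(L) : a \leq m \text{ for all } a \in A\}$, and for $B \subseteq M(L)$ put $B^{\downarrow} = \{j \in J(L) : j \leq b \text{ for all } b \in B\}$. These operators form an antitone Galois connection, so the \emph{formal concepts} -- the pairs $(A,B)$ with $A^{\uparrow} = B$ and $B^{\downarrow} = A$ -- are precisely the fixed points of the induced closure operator $A \mapsto A^{\uparrow\downarrow}$, and hence form a (complete) lattice $\mathcal{B}$, ordered by $(A_1,B_1) \leq (A_2,B_2) \iff A_1 \subseteq A_2$. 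Since $\mathcal{B}$ is defined purely from the relation $\{(x,y) \in J(L) \times M(L) : x \leq y\}$ and is plainly invariant, up to isomorphism, under isomorphisms of formal contexts, it suffices to prove $L \cong \mathcal{B}$; the final clause of the theorem then follows, since isomorphic reduced contexts yield isomorphic concept lattices and hence isomorphic lattices.

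The two ingredients carrying the real content -- and the place I expect the only genuine work -- are finiteness lemmas: \textbf{(i)} every $x \in L$ is the join of the join-irreducibles below it, $x = \bigvee\{j \in J(L) : j \leq x\}$, and dually $x = \bigwedge\{m \in M(L) : x \leq m\}$; and \textbf{(ii)} for $j \in J(L)$ and $x \in L$ with $j \nleq x$, there is an $m \in M(L)$ with $x \leq m$ and $j \nleq m$. For (ii): the set $\{y \in L : y \geq x \text{ and } j \nleq y\}$ is finite and nonempty (it contains $x$), so pick a maximal element $m$; were $m = a \wedge b$ with $a > m$ and $b > m$, maximality would force $j \leq a$ and $j \leq b$, hence $j \leq a \wedge b = m$, a contradiction, so $m$ is meet-irreducible (and $m \neq \top$ because $j \nleq m$). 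Lemma (i) is the familiar fact that a finite lattice is join-generated by its join-irreducibles, proved by a routine induction using finiteness of $L$; it immediately gives the companion statement needed below: for any $y \in L$ one has $x \leq y$ if and only if $j \leq y$ for every $j \in J(L)$ with $j \leq x$.

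With these in hand I would define $\varphi : L \to \mathcal{B}$ by $\varphi(x) = \big( \{j \in J(L) : j \leq x\},\ \{m \in M(L) : x \leq m\} \big)$. That $\varphi(x)$ is genuinely a formal concept is exactly the two computations just assembled: $\{j \in J(L): j \leq x\}^{\uparrow} = \{m \in M(L): x \leq m\}$ by the companion statement, and $\{m \in M(L): x \leq m\}^{\downarrow} = \{j \in J(L): j \leq x\}$ by the contrapositive of (ii). Next, $\varphi$ is an order embedding: $x \leq y$ clearly implies $\{j : j \leq x\} \subseteq \{j : j \leq y\}$, and conversely that containment forces $x = \bigvee\{j : j \leq x\} \leq \bigvee\{j : j \leq y\} = y$ by (i); since the order on $\mathcal{B}$ is containment of first components, $\varphi$ both preserves and reflects order. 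Finally $\varphi$ is surjective: given a concept $(A,B)$, put $x = \bigvee A$; every $m \in B = A^{\uparrow}$ satisfies $a \leq m$ for all $a \in A$, hence $x \leq m$, so any $j \in J(L)$ with $j \leq x$ lies in $B^{\downarrow} = A$, and combined with $A \subseteq \{j \in J(L): j \leq \bigvee A\}$ this gives $\{j \in J(L): j \leq x\} = A$; by well-definedness the second component of $\varphi(x)$ is then $A^{\uparrow} = B$, so $\varphi(x) = (A,B)$. An order isomorphism between lattices automatically preserves meets and joins, so $\varphi$ is a lattice isomorphism and $L \cong \mathcal{B}$, as required. This is precisely the mechanism through which \Cref{letterthm:formal context} is put to use: once the reduced formal context of $\Sat(L)$ is identified, $\Sat(L)$ itself is determined.
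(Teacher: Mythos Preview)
Your proof is correct and is precisely the standard Ganter--Wille argument for the finite case. However, the paper does not actually prove this theorem: it is stated in the background section as a known foundational result of Formal Concept Analysis, with no proof given. The paragraph following the theorem statement sketches the construction $\mathfrak{B}$ and its properties informally, but no argument is supplied. So there is nothing to compare your approach against; you have simply filled in a proof the authors chose to omit, and done so cleanly.
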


More generally, there is a construction $\mathfrak{B}$ which takes a relation between finite sets and produces a finite lattice, with the property that $\mathfrak{B}(R_1) \cong \mathfrak{B}(R_2)$ when $R_1$ and $R_2$ are isomorphic relations. A relation between finite sets $X$ and $Y$ can be viewed as a binary matrix with rows labeled by the elements of $Y$ and columns labeled by the elements of $X$. Two binary matrices produced in this way correspond to isomorphic relations if the rows and columns of one matrix can be permuted to obtain the other matrix. Thus, one can view $\mathfrak{B}$ as a construction which takes in a binary matrix (up to permutations of its rows and columns) and produces a finite lattice (well-defined up to isomorphism).

This construction is also insensitive to repeated rows and columns -- if a binary matrix $M$ contains a repeated row or column, one can remove it to obtain a different binary matrix $M'$ which nonetheless produces the same finite lattice. More generally, one can remove any row or column from $M$ which is the intersection\footnote{Here by an intersection of rows/columns we mean the product in a ring $(\mathbb{Z}/2\mathbb{Z})^k$.} of other rows or columns (respectively) without affecting the resulting finite lattice. A matrix $M$ such that no row or column is an intersection of other rows or columns (respectively) is said to be \emph{reduced}. The matrix corresponding to the relation $\{(x,y) \in J(L) \times M(L) : x \leq y\}$ appearing in the statement of the fundamental theorem of FCA is reduced, and the fundamental theorem of FCA has a converse, which says that two finite lattices are isomorphic if and only if the corresponding reduced matrices are equivalent via permuting rows and columns.

Thus, isomorphism classes of finite lattices are in bijective correspondence with equivalence classes of binary matrices; the binary matrix corresponding to a finite lattice is called its \emph{reduced formal context}. In \cite{BS}, Balchin and the second author describe the reduced formal context of the lattice $\Tr(L)$ for $L$ any finite lattice with $G$-action.

\begin{proposition}[{\cite[Summary 2.15]{BS}}]
    Let $L$ be a finite lattice with $G$-action. Then $J(\Tr(L))$ and $M(\Tr(L))$ are both in natural bijective correspondence with the set of $G$-orbits of nonidentity relations in $L$; precisely,
    \[J(\Tr(L)) = \{\lfloor x \to y \rfloor : x < y\}\]
    \[M(\Tr(L)) = \{\lceil x \to y \rceil^\boxslash : x < y\}.\]
    Moreover, the relation $\subseteq$ between the elements of $J(\Tr(L))$ and $M(\Tr(L))$ is given by
    \[
    \floor{a \to b} \subseteq \ceil{x \to y}^{\boxslash} \text{ if and only if, for all } g \in G,\, 
    g \cdot a \not\geq x \text{ or } g \cdot b \not\geq y \text{ or } g \cdot a \geq y.
    \]
\end{proposition}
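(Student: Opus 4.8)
The plan is to establish the three assertions in turn: first the identification of $J(\Tr(L))$, then that of $M(\Tr(L))$ by a duality argument, and finally the explicit form of the incidence relation between them.

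\emph{Join-irreducibles.} The bottom element of $\Tr(L)$ is the trivial transfer system (only identity arrows), and the join of a family of transfer systems is the transfer system generated by their union. Hence every transfer system $T$ satisfies $T = \bigvee\{\floor{x\to y} : (x\to y)\in T,\ x<y\}$: each $\floor{x\to y}$ on the right lies inside $T$, while their union recovers all arrows of $T$. Since the indexing set is nonempty exactly when $T\ne\bot$, every join-irreducible of $\Tr(L)$ appears among the $\floor{x\to y}$ with $x<y$. The reverse inclusion — that each $\floor{x\to y}$ is genuinely join-irreducible — is the substantive point. I would deduce it from the fact that the single generator is \emph{essential}: it cannot be regenerated from the remaining arrows of $\floor{x\to y}$, so $\floor{x\to y}$ has a unique maximal proper sub-transfer-system, namely the one obtained by deleting the $G$-orbit of $x\to y$. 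Concretely, one uses the explicit description of a generated equivariant transfer system — every arrow of $\floor{\mathcal S}$ is a composite of restrictions of $G$-translates of generators in $\mathcal S$ (cf.\ \cite{Rubin_comb, bbr}, and \cite{BS}) — and checks that if $\floor{x\to y} = S_1\vee S_2$ with both $S_i$ proper, then following the first and last links of such a factorization of $x\to y$ through $S_1\cup S_2$, and using that any restriction of a generator lying in $S_i$ already lies inside $\floor{\text{that generator}}\subseteq S_i$, forces $x\to y\in S_1$ or $x\to y\in S_2$ — a contradiction. The same essentialness statement shows $\floor{x\to y}=\floor{x'\to y'}$ only when $(x,y)$ and $(x',y')$ lie in one $G$-orbit, so the join-irreducibles biject with $G$-orbits of nonidentity relations. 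I expect this to be the main obstacle: it is a genuine (if not deep) combinatorial bookkeeping of which arrows are produced by composition versus by restriction, and it is where one leans on the structure theory of $\Tr(L)$ recalled in the cited references.

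\emph{Meet-irreducibles.} Here I dualize. By \Cref{prop:boxslash adjunction}, $(-)^\boxslash$ and ${}^\boxslash(-)$ form an isomorphism $\Tr(L)^\mathrm{op}\cong\coTr(L)$, and reversing all arrows gives a further isomorphism $\coTr(L)\cong\Tr(L^\mathrm{op})$. An element of $\Tr(L)$ is meet-irreducible precisely when it is join-irreducible in $\Tr(L)^\mathrm{op}$, so, transported across these isomorphisms, $M(\Tr(L))$ corresponds to $J(\Tr(L^\mathrm{op}))$. Applying the description of join-irreducibles just established to the lattice $L^\mathrm{op}$ (with the same $G$-action), these are the transfer systems on $L^\mathrm{op}$ generated by a single relation; reversing arrows turns them into the cotransfer systems $\ceil{x\to y}$ on $L$ with $x<y$; and applying $(-)^\boxslash$ identifies them with the elements $\ceil{x\to y}^\boxslash$ of $\Tr(L)$. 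Hence $M(\Tr(L)) = \{\ceil{x\to y}^\boxslash : x<y\}$, again naturally indexed by $G$-orbits.

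\emph{The incidence relation.} I first reduce to a single membership. Since $\ceil{x\to y}$ is a cotransfer system, $\ceil{x\to y}^\boxslash$ is a transfer system by \Cref{prop:boxslash adjunction}, so $\floor{a\to b}\subseteq\ceil{x\to y}^\boxslash$ holds if and only if $(a\to b)\in\ceil{x\to y}^\boxslash$ (because $\floor{a\to b}$ is the least transfer system containing $a\to b$). Next I replace $\ceil{x\to y}$ by its generating $G$-orbit: for any single arrow $f$, the left-lifting class ${}^\boxslash\{f\}$ is a cotransfer system — a short direct verification of the cotransfer-system axioms — so any cotransfer system all of whose generators lift against $f$ lifts against $f$ in its entirety. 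Taking $f=(a\to b)$, we get $(a\to b)\in\ceil{x\to y}^\boxslash$ if and only if $(g\cdot x\to g\cdot y)\boxslash(a\to b)$ for every $g\in G$. Unwinding the lifting property, $(g\cdot x\to g\cdot y)\boxslash(a\to b)$ fails exactly when $g\cdot x\le a$, $g\cdot y\le b$, and $g\cdot y\not\le a$ all hold; so the condition is: for every $g\in G$, $g\cdot x\not\le a$ or $g\cdot y\not\le b$ or $g\cdot y\le a$. Finally, substituting $g\mapsto g^{-1}$ and using that $G$ acts by lattice automorphisms (whence $g\cdot x\le a\iff x\le g^{-1}\cdot a$, and similarly for the other two clauses) rewrites this as: for every $g\in G$, $g\cdot a\not\ge x$ or $g\cdot b\not\ge y$ or $g\cdot a\ge y$, which is the asserted criterion.
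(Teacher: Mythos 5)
The paper does not actually prove this proposition --- it is imported verbatim from \cite[Summary 2.15]{BS} --- but your reconstruction is correct and follows essentially the same route as that source and as the paper's own parallel arguments for $\Sat(L)$ in Section~3: Rubin's explicit description of $\floor{-}$ to pin down the join-irreducibles, the $(-)^\boxslash$ duality $\Tr(L)^{\mathrm{op}} \cong \coTr(L) \cong \Tr(L^{\mathrm{op}})^{\mathrm{op}\,\mathrm{op}}$ to transport that result to the meet-irreducibles, and the reduction of $\floor{a\to b}\subseteq\ceil{x\to y}^\boxslash$ to a single lifting condition against the generating $G$-orbit (your observation that ${}^\boxslash\{f\}$ is itself a cotransfer system is exactly the right device, and the $g\mapsto g^{-1}$ reindexing recovers the stated form). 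The only point left as a sketch is the equivariant bookkeeping showing each $\floor{x\to y}$ is genuinely join-irreducible --- one must check, using Rubin's description together with finiteness of the action on $L$, that any arrow of $\floor{x\to y}$ with target $y$ has source in the orbit of $x$ --- but you correctly flag this as the substantive step and your outline of it matches the argument in the cited reference.
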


In this paper, we are concerned with identifying the lattice $\Sat(\Sub(C_p^n))$, i.e. the lattice of saturated transfer systems on the subgroup lattice of an elementary abelian group. As a result, we would like to identify the sets $J(\Sat(\Sub(C_p^n)))$ and $M(\Sat(\Sub(C_p^n)))$, as well as the relation $\subseteq$ between them.

\section{Irreducible Saturated Transfer Systems}
In this section we identify the reduced formal context of $\Sat(L)$ for any finite modular lattice $L$ -- of primary interest is the case $L = \Sub(G)$ where $G$ is a finite abelian group.

\begin{lemma}\label{lem: saturated closure of transfer system}
    Let $L$ be a finite lattice. Let $\mathsf{T} \in \Tr(L)$, and let
    \[\mathsf{T}^\natural = \{b \to c : b \leq c, \exists a \leq b (a \xrightarrow{\mathsf{T}} c)\}.\]
    Then $\floor{\mathsf{T}}_\mathrm{sat} = \mathsf{T}^\natural$.
\end{lemma}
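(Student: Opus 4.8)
The plan is to prove the two inclusions $\floor{\mathsf{T}}_\mathrm{sat} \subseteq \mathsf{T}^\natural$ and $\mathsf{T}^\natural \subseteq \floor{\mathsf{T}}_\mathrm{sat}$ separately. For the first, I would check directly that $\mathsf{T}^\natural$ is itself a saturated transfer system containing $\mathsf{T}$; since $\floor{\mathsf{T}}_\mathrm{sat}$ is by definition the smallest such, this yields the inclusion. For the second, I would show that every saturated transfer system $\mathsf{S}$ with $\mathsf{T} \subseteq \mathsf{S}$ already contains $\mathsf{T}^\natural$, and then specialize to $\mathsf{S} = \floor{\mathsf{T}}_\mathrm{sat}$.

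The second inclusion is the quick one. If $b \to c$ lies in $\mathsf{T}^\natural$, choose a witness $a \leq b$ with $a \xrightarrow{\mathsf{T}} c$. Then $a \xrightarrow{\mathsf{S}} c$, and since $a \leq b \leq c$ and $\mathsf{S}$ is saturated, the reformulation of saturation (``if $a \to c$ then $b \to c$'') gives $b \xrightarrow{\mathsf{S}} c$. Hence $\mathsf{T}^\natural \subseteq \mathsf{S}$, and in particular $\mathsf{T}^\natural \subseteq \floor{\mathsf{T}}_\mathrm{sat}$.

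For the first inclusion I must verify that $\mathsf{T}^\natural$ is a saturated transfer system containing $\mathsf{T}$. Containment of $\mathsf{T}$ (take $a = b$), reflexivity (again $a = b$, using reflexivity of $\mathsf{T}$), the axiom $b \to c \Rightarrow b \leq c$ (built into the definition of $\mathsf{T}^\natural$), and antisymmetry (a formal consequence of the previous axiom) are all immediate. Transitivity is the one genuinely non-formal point: given $b \xrightarrow{\mathsf{T}^\natural} c$ with witness $a \leq b$ and $c \xrightarrow{\mathsf{T}^\natural} d$ with witness $a' \leq c$, I would apply the restriction axiom of $\mathsf{T}$ to $a \xrightarrow{\mathsf{T}} c$ and $a' \leq c$ to get $a \wedge a' \xrightarrow{\mathsf{T}} a'$, then compose with $a' \xrightarrow{\mathsf{T}} d$ to obtain $a \wedge a' \xrightarrow{\mathsf{T}} d$; since $a \wedge a' \leq a \leq b$, this exhibits $b \xrightarrow{\mathsf{T}^\natural} d$. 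For the restriction axiom of $\mathsf{T}^\natural$: if $b \xrightarrow{\mathsf{T}^\natural} c$ with witness $a \leq b$ and $z \leq c$, then restriction in $\mathsf{T}$ gives $a \wedge z \xrightarrow{\mathsf{T}} z$, and $a \wedge z \leq b \wedge z$, so $b \wedge z \xrightarrow{\mathsf{T}^\natural} z$. Finally, saturation in the reformulated guise (``if $x \to z$ then $y \to z$'' for $x \leq y \leq z$) is immediate: a witness $a \leq x$ for $x \xrightarrow{\mathsf{T}^\natural} z$ is also a witness for $y \xrightarrow{\mathsf{T}^\natural} z$, since $a \leq x \leq y$.

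I expect the transitivity argument to be the main (and essentially the only) obstacle, since it is the single place where the axioms of $\mathsf{T}$ must be combined non-trivially; the rest is bookkeeping with the defining property of $\mathsf{T}^\natural$ together with monotonicity of $\wedge$. Note that the two inclusions together also immediately identify $\mathsf{T}^\natural$ as precisely the saturated transfer system generated by $\mathsf{T}$.
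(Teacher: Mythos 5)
Your proposal is correct and follows essentially the same strategy as the paper: sandwich $\mathsf{T} \subseteq \mathsf{T}^\natural \subseteq \floor{\mathsf{T}}_\mathrm{sat}$ and then verify directly that $\mathsf{T}^\natural$ is a saturated transfer system. The only (cosmetic) difference is in the transitivity check, where you combine the two witnesses $a \leq b$ and $a' \leq c$ directly via the restriction axiom of $\mathsf{T}$ to produce the witness $a \wedge a'$, whereas the paper first establishes restriction-closure of $\mathsf{T}^\natural$ and then unwinds an extra layer of witnesses; your version is, if anything, slightly more streamlined.
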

\begin{proof}
    It is clear by construction that
    \[\mathsf{T} \subseteq \mathsf{T}^\natural \subseteq \floor{\mathsf{T}}_\mathrm{sat}.\]
    Thus, it suffices to show that $\mathsf{T}^\natural$ is a saturated transfer system. By construction, $\mathsf{T}^\natural$ is a reflexive relation refining $\leq$. It remains to be shown that $\mathsf{T}^\natural$ is closed under restriction and 2-out-of-3.

    First, we check that $\mathsf{T}^\natural$ is closed under restriction. Let $b \xrightarrow{\mathsf{T}^\natural} c$ and $x \leq c$. By assumption, we have $a \xrightarrow{\mathsf{T}} c$ for some $a \leq b$. We can then form the diagram
    \[\begin{tikzcd}[ampersand replacement=\&]
    	x \& c \\
    	{x \wedge b} \& b \\
    	{x \wedge a} \& a
    	\arrow[from=1-1, to=1-2]
    	\arrow[from=2-1, to=1-1]
    	\arrow[from=2-1, to=2-2]
    	\arrow["{\mathsf{T}^\natural}"', from=2-2, to=1-2]
    	\arrow["{\mathsf{T}}", curve={height=-24pt}, from=3-1, to=1-1]
    	\arrow[from=3-1, to=2-1]
    	\arrow[from=3-1, to=3-2]
    	\arrow["{\mathsf{T}}"', curve={height=24pt}, from=3-2, to=1-2]
    	\arrow[from=3-2, to=2-2]
    \end{tikzcd}\]
    demonstrating that $x \wedge b \xrightarrow{\mathsf{T}^\natural} x$, as desired.

    Next, we check that $\mathsf{T}^\natural$ is closed under composition. Let $b \xrightarrow{\mathsf{T}^\natural} c \xrightarrow{\mathsf{T}^\natural} d$. In particular, we have $x \leq c$ such that $x \xrightarrow{\mathsf{T}} d$. Since we have already established that $\mathsf{T}^\natural$ is closed under restriction, we have $x \wedge b \xrightarrow{\mathsf{T}^\natural} x$. Thus, there is some $w \leq x \wedge b$ such that $w \xrightarrow{\mathsf{T}} x$, i.e.
    \[\begin{tikzcd}[ampersand replacement=\&]
    	w \\
    	{x \wedge b} \& x \\
    	b \& c \& d
    	\arrow[from=1-1, to=2-1]
    	\arrow["{\mathsf{T}}", from=1-1, to=2-2]
    	\arrow["{\mathsf{T}^\natural}"', from=2-1, to=2-2]
    	\arrow[from=2-1, to=3-1]
    	\arrow[from=2-2, to=3-2]
    	\arrow["{\mathsf{T}}", from=2-2, to=3-3]
    	\arrow["{\mathsf{T}^\natural}"', from=3-1, to=3-2]
    	\arrow["{\mathsf{T}^\natural}"', from=3-2, to=3-3]
    \end{tikzcd}\]
    In particular, we have $w \leq b$ and $w \xrightarrow{\mathsf{T}} d$, so $x \xrightarrow{\mathsf{T}^\natural} d$, as desired.

    Finally, we must show that $b \leq c \leq d$ and $b \xrightarrow{\mathsf{T}^\natural} d$ implies $c \xrightarrow{\mathsf{T}^\natural} d$. This is immediate: $b \xrightarrow{\mathsf{T}^\natural} d$ implies that there exists $a \leq b$ such that $a \xrightarrow{\mathsf{T}} d$, but then also $a \leq c$, so $c \xrightarrow{\mathsf{T}^\natural} d$.
\end{proof}

\begin{theorem}[cf. Theorem A]\label{thm: classification of J(Sat) and M(Sat)}
    Let $L$ be a finite modular lattice. The join- and meet-irreducible saturated transfer systems on $L$ are precisely the join- and meet-irreducible (respectively) transfer systems on $L$ which happen to also be saturated. These are precisely $\floor{x \to y}$ and $\ceil{\bot \to z}^\boxslash$ (respectively) as $x \to y$ ranges over the covering relations in $L$ and $z$ ranges over the non-minimum elements of $L$.
\end{theorem}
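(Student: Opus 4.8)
The plan is to prove two chains of inclusions, one for join‑irreducibles and one for meet‑irreducibles, squeezing three sets into equality in each case. On the join side these are $J(\Sat(L))$, $J(\Tr(L))\cap\Sat(L)$, and $\{\floor{x\to y}: x<y \text{ a covering relation}\}$; on the meet side they are $M(\Sat(L))$, $M(\Tr(L))\cap\Sat(L)$, and $\{\ceil{\bot\to z}^\boxslash : z\neq\bot\}$. The inputs are the Balchin--Spitz description of $J(\Tr(L))$ and $M(\Tr(L))$ (for trivial group action, i.e.\ for any finite lattice), \Cref{lem: saturated closure of transfer system}, the correspondence between saturated transfer systems and saturated covers, and the order‑theoretic fact that $J(P)$ (resp.\ $M(P)$) is contained in every subset of a finite lattice $P$ that join‑generates (resp.\ meet‑generates) it.

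For the join side I would first record that, for any relation $a<b$, one has $\floor{a\to b}=\{(a\wedge w,w):w\leq b\}\cup\Delta$ (closure under restriction is immediate, every composite is trivial, and this is clearly the smallest transfer system containing $a\to b$). When $a\to b$ is a covering relation, modularity gives the interval isomorphism $[a\wedge w,w]\cong[a,a\vee w]$, and $[a,a\vee w]\subseteq[a,b]$ has length $\leq 1$, so each arrow $(a\wedge w,w)$ of $\floor{a\to b}$ is an identity or a covering relation; from this $2$‑out‑of‑$3$ follows directly, so $\floor{a\to b}$ is saturated. With Balchin--Spitz this gives $\{\floor{x\to y}:x\to y\text{ covering}\}\subseteq J(\Tr(L))\cap\Sat(L)$. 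Next, every $\mathsf{T}\in\Sat(L)$ is the join \emph{in $\Sat(L)$} of $\{\floor{x\to y}:x\to y\text{ a covering relation of }\mathsf{T}\}$ (since $\mathsf{T}=\floor{\mathcal S}$ for $\mathcal S$ its saturated cover, and this $\Sat$‑join is by definition the smallest saturated transfer system containing $\mathcal S$, which is $\floor{\mathcal S}$ as that is already saturated), so $J(\Sat(L))\subseteq\{\floor{x\to y}:x\to y\text{ covering}\}$. Finally, for $\mathsf{T}\in J(\Tr(L))\cap\Sat(L)$, write $\mathsf{T}=\floor{a\to b}$; if $\mathsf{T}=\mathsf{T}_1\vee_{\Sat}\mathsf{T}_2$ then $\mathsf{U}:=\mathsf{T}_1\vee_{\Tr}\mathsf{T}_2\subseteq\mathsf{T}$ and, by \Cref{lem: saturated closure of transfer system}, $\mathsf{U}^\natural=\mathsf{T}$ (the $\Sat$‑join is the saturation of the $\Tr$‑join). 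Then $a\to b\in\mathsf{U}^\natural$ yields $a'\leq a$ with $a'\xrightarrow{\mathsf U}b$; but $\mathsf{U}\subseteq\floor{a\to b}$, whose only nonidentity arrow into $b$ is $a\to b$ itself, so $a'=a$, $a\to b\in\mathsf{U}$, and $\mathsf{U}=\floor{a\to b}=\mathsf{T}$. Join‑irreducibility of $\mathsf{T}$ in $\Tr(L)$ then forces $\mathsf{T}\in\{\mathsf{T}_1,\mathsf{T}_2\}$, closing the loop.

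For the meet side I would first compute that $\ceil{\bot\to z}$ consists of the arrows $w\to w\vee z$ ($w\in L$) together with identities, and then, unwinding the lifting property against these, $\ceil{\bot\to z}^\boxslash=\{(a,b):a\leq b,\ z\leq a\text{ or }z\not\leq b\}$; saturation of this is a one‑line check. The inclusion $\{\ceil{\bot\to z}^\boxslash:z\neq\bot\}\subseteq M(\Tr(L))\cap\Sat(L)$ follows from Balchin--Spitz (take source $\bot$), and $M(\Tr(L))\cap\Sat(L)\subseteq M(\Sat(L))$ is formal, since meets are intersections in both lattices. The substantive step is that $\{\ceil{\bot\to z}^\boxslash:z\neq\bot\}$ meet‑generates $\Sat(L)$: given $\mathsf{T}\in\Sat(L)$ and a nonarrow $a\to b$, the set $\{x:x\xrightarrow{\mathsf T}b\}$ is an up‑set of $[\bot,b]$ closed under meets (restriction gives $x_1\wedge x_2\xrightarrow{\mathsf T}x_2$, then compose with $x_2\xrightarrow{\mathsf T}b$), so it has a least element $z$, with $z\xrightarrow{\mathsf T}b$; since $a\to b\notin\mathsf{T}$ we get $z\not\leq a$, and $z\neq\bot$ because $\bot\xrightarrow{\mathsf T}b$ would force $\bot\xrightarrow{\mathsf T}a$ and hence $a\xrightarrow{\mathsf T}b$. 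The same minimality shows $z$ is the least source of any $\mathsf{T}$‑arrow into $z$; restricting any $\mathsf{T}$‑arrow $x\to y$ with $z\leq y$ by $z$ then forces $z\leq x$, so $\mathsf{T}\subseteq\ceil{\bot\to z}^\boxslash$, while $a\to b\notin\ceil{\bot\to z}^\boxslash$ since $z\leq b$ and $z\not\leq a$. Hence $\mathsf{T}$ is an intersection of members of $\{\ceil{\bot\to z}^\boxslash\}$, giving $M(\Sat(L))\subseteq\{\ceil{\bot\to z}^\boxslash:z\neq\bot\}$ and closing the loop. As a byproduct one notes that each $\ceil{\bot\to z}^\boxslash$ is a coatom of $\Sat(L)$: any strictly larger saturated transfer system contains $\bot\to z$, which cascades through restriction, composition, and saturation to the full relation.

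I expect the meet‑generation step to be the main obstacle. Because $\Sat(L)$ is only a closure system inside $\Tr(L)$ and not a sublattice, the inclusion ``$M$ of a closure system $\subseteq M$ of the ambient lattice'' fails in general, so $M(\Sat(L))\subseteq M(\Tr(L))\cap\Sat(L)$ must be extracted from the combinatorics of $\Sat(L)$ itself; the device of taking $z$ to be the minimal source of a $\mathsf{T}$‑arrow into $b$, together with the observation that this set of sources is a principal up‑set, is exactly what makes it go through. The one other point demanding care is the identification $\mathsf{T}_1\vee_{\Sat}\mathsf{T}_2=(\mathsf{T}_1\vee_{\Tr}\mathsf{T}_2)^\natural$ on the join side, which is where \Cref{lem: saturated closure of transfer system} is used and where a less careful argument would break down.
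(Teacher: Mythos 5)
Your proposal is correct. On the join side it follows essentially the same route as the paper: Rubin's description of $\floor{a\to b}$, modularity to see that a covering relation generates only identities and covering relations (hence a saturated transfer system), the saturated-cover correspondence to see that these join-generate $\Sat(L)$, and \Cref{lem: saturated closure of transfer system} applied to $(\mathsf{T}_1\vee_{\Tr}\mathsf{T}_2)^\natural$ to verify join-irreducibility; your final step (deduce $\mathsf{U}=\mathsf{T}$ and then invoke join-irreducibility in $\Tr(L)$) is a slightly cleaner packaging of the same computation. The genuine divergence is on the meet side. The paper handles the containment $M(\Sat(L))\subseteq\{\ceil{\bot\to z}^\boxslash\}$ by citing an external result (that the left lifting class ${}^\boxslash\mathsf{T}$ of a meet-irreducible saturated transfer system is generated by arrows out of $\bot$) and then transporting across the Galois connection ${}^\boxslash(-)\dashv(-)^\boxslash$. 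You instead prove directly that the family $\{\ceil{\bot\to z}^\boxslash : z\neq\bot\}$ meet-generates $\Sat(L)$: for a non-arrow $a\to b$ of $\mathsf{T}$ you take $z$ to be the minimum of the meet-closed, saturation-up-closed set of sources of $\mathsf{T}$-arrows into $b$, check $z\not\leq a$ and $\mathsf{T}\subseteq\ceil{\bot\to z}^\boxslash$ via restriction along $z$ and minimality, and separate $a\to b$. I verified this argument and it is sound (and note $z\neq\bot$ already follows from $z\not\leq a$). This buys a self-contained, purely combinatorial proof of the hardest inclusion, at the cost of not exhibiting the structural fact about ${}^\boxslash\mathsf{T}$ that the paper leans on; the paper's route is shorter given the citation but is not self-contained. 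Your closing remark that each $\ceil{\bot\to z}^\boxslash$ is a coatom of $\Sat(L)$ is true but not needed.
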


To prove this theorem, we will need control over the sets $\floor{x \to y}$ and $\ceil{x \to y}$. This is provided by a result of Rubin~\cite[Proposition A.2]{Rubin}.

\begin{proposition}
    Let $X$ be a set of arrows in a lattice $L$. Then
    \[\floor{X} = \{z \wedge a \to z : a \to b \in X, z \leq b\}^\circ,\]
    where ${(-)}^\circ$ denotes the reflexive-transitive closure of a binary relation.

    If $X$ is a singleton set, then one needs only take the reflexive closure.
\end{proposition}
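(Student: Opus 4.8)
The plan is to set $R = \{z \wedge a \to z : (a \to b) \in X,\ z \leq b\}$ and establish the two inclusions $R^\circ \subseteq \floor{X}$ and $\floor{X} \subseteq R^\circ$ separately. For the first, note that $\floor{X}$ is a transfer system containing $X$, so for every $(a \to b) \in X$ and every $z \leq b$ the restriction axiom forces $z \wedge a \to z$ into $\floor{X}$; thus $R \subseteq \floor{X}$, and since $\floor{X}$ is a partial order (hence reflexive and transitive) it absorbs the reflexive–transitive closure, giving $R^\circ \subseteq \floor{X}$.

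For the reverse inclusion it suffices, by minimality of $\floor{X}$, to check that $R^\circ$ is itself a transfer system containing $X$. Taking $z = b$ shows $(a \to b) \in R$ for each $(a \to b) \in X$, so $X \subseteq R^\circ$. Every generator of $R$ satisfies $z \wedge a \leq z$, so $R \subseteq {\leq}$, and since $\leq$ is a preorder this gives $R^\circ \subseteq {\leq}$, which also yields antisymmetry of $R^\circ$; reflexivity and transitivity are automatic from the closure operation. The one substantive point is the restriction axiom: if $x \xrightarrow{R^\circ} y$ and $w \leq y$, then $x \wedge w \xrightarrow{R^\circ} w$. I would prove this by induction on the length $n$ of an $R$-path witnessing $x \xrightarrow{R^\circ} y$. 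When $n = 0$ we have $x = y$, so $w \leq x$ gives $x \wedge w = w$ and the claim is the reflexive pair $w \xrightarrow{R^\circ} w$. For $n \geq 1$, factor the path as $x \xrightarrow{R^\circ} x'$ (length $n-1$) followed by a single edge $x' \to y$, which by definition of $R$ has the form $z \wedge a \to z$ with $y = z \leq b$ and $x' = z \wedge a$; since $w \leq z$ one computes $x' \wedge w = a \wedge (z \wedge w) = a \wedge w$, and $a \wedge w \to w$ is again a generator of $R$ (same $a \to b$, with $z$ replaced by $w \leq b$), so $x' \wedge w \xrightarrow{R^\circ} w$. Applying the inductive hypothesis to $x \xrightarrow{R^\circ} x'$ with the element $x' \wedge w \leq x'$ gives $x \wedge (x' \wedge w) \xrightarrow{R^\circ} x' \wedge w$; since $x \leq x'$ (because $R^\circ \subseteq {\leq}$) we have $x \wedge (x' \wedge w) = x \wedge w$, and composing the last two relations yields $x \wedge w \xrightarrow{R^\circ} w$.

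For the singleton refinement $X = \{a \to b\}$, the point is that every source $z \wedge a$ of a generator of $R$ lies below $a$, so whenever such a source is also the target of a generator, that generator is reflexive; hence $R$ has no nontrivial composites, $R$ together with the diagonal is already transitive, and the restriction argument above needs only its $n = 0$ and $n = 1$ cases — so the reflexive closure of $R$ already equals $\floor{a \to b}$.

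I expect the main obstacle to be the restriction axiom for $R^\circ$: setting up the induction on path length correctly and carefully bookkeeping the meet identities (especially $x \wedge (x' \wedge w) = x \wedge w$, which relies on $x \leq x'$). Everything else — the easy inclusion, and the reflexivity, transitivity, and antisymmetry of $R^\circ$ — is routine, and notably no modularity of $L$ is needed.
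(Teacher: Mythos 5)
Your proof is correct. The paper does not prove this proposition at all --- it is quoted from Rubin (Proposition A.2 of the cited paper) --- and your argument is essentially the standard one used there: show $R \subseteq \floor{X}$ so that $R^\circ \subseteq \floor{X}$, and conversely verify that $R^\circ$ is itself a transfer system containing $X$, with the only substantive step being the restriction axiom (your induction on path length, including the meet bookkeeping $x' \wedge w = a \wedge w$ and $x \wedge (x' \wedge w) = x \wedge w$, is sound), and the singleton case follows from your observation that composable generators collapse because sources of generators lie below $a$.
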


\begin{lemma}\label{lem: saturated principal is covering relation}
    Let $L$ be a modular lattice, and let $x < y$ be a covering relation in $L$. Then $\floor{x \to y}$ consists only of identity and covering relations.
\end{lemma}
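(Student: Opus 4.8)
The plan is to reduce the statement to an explicit description of $\floor{x \to y}$ and then exploit modularity through a transposition isomorphism. Since $\{x \to y\}$ is a singleton, the preceding proposition (Rubin) tells us that $\floor{x \to y}$ is just the \emph{reflexive} closure of the relation $\{z \wedge x \to z : z \leq y\}$. Hence every non-identity arrow of $\floor{x \to y}$ has the form $z \wedge x \to z$ for some $z \leq y$, and this arrow fails to be an identity precisely when $z \not\leq x$. It therefore suffices to prove: for every $z \leq y$ with $z \not\leq x$, the relation $z \wedge x < z$ is a covering relation in $L$.

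Next I would invoke the standard diamond (transposition) isomorphism for modular lattices: for any $a,b \in L$, the intervals $[a \wedge b, b]$ and $[a, a \vee b]$ are isomorphic as posets, via $u \mapsto u \vee a$ with inverse $v \mapsto v \wedge b$. Applying this with $a = x$, $b = z$ gives a poset isomorphism $[x \wedge z, z] \cong [x, x \vee z]$. Now $x \leq x \vee z \leq y$ because $z \leq y$; since $x < y$ is a covering relation, the interval $[x,y]$ has exactly two elements, so $x \vee z \in \{x, y\}$. If $x \vee z = x$ then $z \leq x$, contrary to our assumption, so $x \vee z = y$ and $[x, x \vee z] = [x, y]$ is a two-element chain. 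Transporting along the isomorphism, $[x \wedge z, z]$ is also a two-element chain, i.e.\ $x \wedge z < z$ is a covering relation, which is what we wanted.

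I do not anticipate a serious obstacle here. The one point deserving care is confirming that the transposition map really is a poset isomorphism — this is exactly where modularity of $L$ is used, as opposed to a general lattice where these maps are merely order-preserving bijections on the relevant chains without preserving non-covering — and that a poset isomorphism carries the property ``is a two-element chain'' from one interval to the other. It is also worth noting in passing that the degenerate cases ($z = \bot$, $z$ comparable to $x$, $z$ comparable to $y$) need no separate treatment: in each of them the arrow $z \wedge x \to z$ is already an identity arrow, so it is vacuously covered by the claim.
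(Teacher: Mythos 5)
Your proof is correct and takes essentially the same route as the paper: both reduce via Rubin's description to showing $z \wedge x < z$ is a covering relation for $z \leq y$, and both get this from modularity of the interval pair $[x \wedge z, z]$ and $[x, x \vee z] \subseteq [x,y]$. The only difference is packaging — you invoke the standard diamond/transposition isomorphism, while the paper carries out the same two modular-law computations inline as a proof by contradiction.
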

\begin{proof}
    Any non-identity arrow in $\floor{x \to y}$ is of the form $b \wedge x \to b$ for some $b \leq y$. Let $c \in L$ be such that $b \wedge x < c < b$. We have
    \[x \leq x \vee c = x \vee (c \wedge y) = (x \vee c) \wedge y \leq y,\]
    so $x \vee c = x$ or $x \vee c = y$. If $x \vee c = x$ then $c \leq x$, so $c \leq b \wedge x < c$, which is a contradiction. If $x \vee c = y$, then $c = c \vee (x \wedge b) = (c \vee x) \wedge b = y \wedge b = b$, which is again a contradiction.
\end{proof}

\begin{lemma}\label{lem: when principal floor is saturated}
    Let $L$ be a lattice, and let $x, y \in L$ with $x \leq y$. Then $\floor{x \to y}$ is saturated if and only if $x \to y$ is a cover relation.
\end{lemma}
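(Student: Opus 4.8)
The plan is to combine Rubin's description of $\floor{x \to y}$ for a singleton generator --- namely the reflexive closure of $\{\, c \wedge x \to c : c \leq y \,\}$ --- with \Cref{lem: saturated principal is covering relation}. Throughout I would assume $x < y$, the case $x = y$ being a trivial degeneracy.

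For the ``only if'' direction I would argue by contraposition. Suppose $x < y$ is not a covering relation, and pick $w$ with $x < w < y$. Since $w \geq x$ we have $w \wedge x = x$, so $x \to w$ is literally the arrow $w \wedge x \to w$ and hence lies in $\floor{x \to y}$ (as $w \leq y$); likewise $x \to y$ is the arrow $(y \wedge x) \to y$ and lies in $\floor{x \to y}$. On the other hand, every non-identity arrow of $\floor{x \to y}$ has the form $c \wedge x \to c$ with $c \leq y$, so an arrow $w \to y$ of this shape would force $c = y$ and $w = y \wedge x = x$, contradicting $x < w$; and $w \to y$ is not an identity since $w \neq y$. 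Hence $w \to y \notin \floor{x \to y}$, and the triple $x \leq w \leq y$ witnesses a failure of 2-out-of-3, so $\floor{x \to y}$ is not saturated.

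For the ``if'' direction, suppose $x \to y$ is a covering relation. By \Cref{lem: saturated principal is covering relation} (this is the only place the modular hypothesis is used), every arrow of $\floor{x \to y}$ is an identity or a covering relation, and then 2-out-of-3 holds for free: if $a \leq b \leq c$ with $a \to c$ in $\floor{x \to y}$, then either $a \to c$ is an identity, forcing $a = b = c$, or it is a covering relation, leaving no room for $b$ strictly between $a$ and $c$ and hence forcing $b \in \{a, c\}$; in every case $b \to c$ is an identity or is the arrow $a \to c$ itself, so it lies in $\floor{x \to y}$. Therefore $\floor{x \to y}$ is saturated.

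The one delicate point is the claim, used above, that $\floor{x \to y}$ contains no arrow $w \to y$ with $x < w < y$; this rests on Rubin's observation that for a singleton generator one takes only the reflexive (not the reflexive-transitive) closure --- equivalently, the only composable pairs among the arrows $c \wedge x \to c$ involve an identity, so composition produces nothing new. Everything else is bookkeeping against the two cited lemmas; as an alternative to \Cref{lem: saturated principal is covering relation} in the ``if'' direction, one could instead verify $\floor{x \to y}^\natural = \floor{x \to y}$ via \Cref{lem: saturated closure of transfer system}, but the argument above is cleaner.
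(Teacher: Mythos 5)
Your proof is correct and follows essentially the same route as the paper: the "only if" direction uses Rubin's singleton description of $\floor{x \to y}$ to exhibit a failure of 2-out-of-3 at an intermediate element $w$, and the "if" direction invokes \Cref{lem: saturated principal is covering relation} to reduce to the observation that a system consisting only of identities and covering relations is vacuously saturated. You spell out the Rubin step (which the paper leaves implicit) and correctly flag that the "if" direction is where modularity enters.
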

\begin{proof}
    First, suppose $x \to y$ is not a cover relation, so that there is some $z \in L$ such that $x < z < y$. To show that $\floor{x \to y}$ is not saturated, it suffices to show that $z \to y \notin \floor{x \to y}$. Rubin's theorem shows that this is the case.

    In the other direction, suppose $x \to y$ is a cover relation. Let $a \to b \in \floor{x \to y}$ be arbitrary. By \Cref{lem: saturated principal is covering relation}, $a \to b$ is either an identity relation or a covering relation -- in either case, there is nothing to check to ensure that $\floor{x \to y}$ is saturated.
\end{proof}

\begin{lemma}
    Let $L$ be a lattice, and let $x,y \in L$ with $x \leq y$. Then
    \[\ceil{x \to y}^\boxslash = \{x \to y\}^\boxslash.\]
\end{lemma}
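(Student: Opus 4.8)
The plan is to prove the two inclusions separately. The inclusion $\ceil{x \to y}^\boxslash \subseteq \{x \to y\}^\boxslash$ is immediate: the arrow $x \to y$ belongs to $\ceil{x \to y}$, so $\{x \to y\} \subseteq \ceil{x \to y}$, and since $(-)^\boxslash$ is order-reversing this containment is reversed after passing to right lifting classes.

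For the reverse inclusion, the key observation is that for any single arrow $f : x' \leq y'$ in $L$, the left lifting class ${}^\boxslash\{f\}$ is itself a cotransfer system. To see this I would first check that ${}^\boxslash\{f\}$ is a partial order refining $\leq$: it contains every identity arrow and refines $\leq$ directly from the definition of the lifting property, antisymmetry is automatic, and transitivity is a one-line diagram chase (given $a \to b$ and $b \to c$ both lifting against $f$, and assuming $a \leq x'$, $c \leq y'$, first lift $b$ over the square with $a \leq x'$ and $b \leq y'$ to get $b \leq x'$, then lift $c$ over the square with $b \leq x'$ and $c \leq y'$ to get $c \leq x'$). Next I would verify the cotransfer closure axiom: if $(a \leq b) \boxslash f$ and $a \leq c$, then $(c \leq b \vee c) \boxslash f$ — assuming $c \leq x'$ and $b \vee c \leq y'$, one has $a \leq c \leq x'$ and $b \leq b \vee c \leq y'$, so the lift for $a \leq b$ yields $b \leq x'$, whence $b \vee c \leq x'$. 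Since cotransfer systems are closed under intersection and ${}^\boxslash X = \bigcap_{f \in X} {}^\boxslash\{f\}$, it follows that ${}^\boxslash X$ is a cotransfer system for any set $X$ of arrows (in particular for $X = \{f\}$, which is all that is needed here).

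With this in hand the argument concludes quickly. Fix $f \in \{x \to y\}^\boxslash$, so that $(x \to y) \boxslash f$, i.e.\ $x \to y \in {}^\boxslash\{f\}$. Since ${}^\boxslash\{f\}$ is a cotransfer system containing the arrow $x \to y$, and $\ceil{x \to y}$ is by definition the smallest such cotransfer system, we get $\ceil{x \to y} \subseteq {}^\boxslash\{f\}$; equivalently, $g \boxslash f$ for every $g \in \ceil{x \to y}$, which is precisely the statement that $f \in \ceil{x \to y}^\boxslash$. This gives $\{x \to y\}^\boxslash \subseteq \ceil{x \to y}^\boxslash$, completing the proof.

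The only genuine content is the verification that ${}^\boxslash\{f\}$ is a cotransfer system, and within that the transitivity and join-closure axioms; but both reduce to short diagram chases with the definition of the lifting property, so I do not expect a real obstacle. The conceptual point to get right is simply that the closure properties of a left lifting class of an arrow are exactly the defining axioms of a cotransfer system, which is what forces $\ceil{x \to y}$ to be contained in it.
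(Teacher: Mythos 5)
Your proof is correct, but it takes a genuinely different route from the paper's. The paper argues concretely: it uses the (dual of the) explicit description of generated cotransfer systems from Rubin's Proposition A.2 --- every non-identity arrow of $\ceil{x \to y}$ has the form $r \to y \vee r$ with $r \geq x$ --- and then verifies the lifting property of such an arrow against any $a \to b \in \{x \to y\}^\boxslash$ by a short diagram chase (from $x \leq r \leq a$ and $y \leq s \leq b$ one gets $y \leq a$, hence $s = y \vee r \leq a$). You instead argue by universal property: you show that the left lifting class ${}^\boxslash\{f\}$ of a single arrow is itself a cotransfer system (your verifications of transitivity and of the join-closure axiom are both correct), so that $x \to y \in {}^\boxslash\{f\}$ forces $\ceil{x \to y} \subseteq {}^\boxslash\{f\}$ by minimality. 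Note that you genuinely need this verification, since the cited result of Luo only asserts that ${}^\boxslash T$ is a cotransfer system when $T$ is a \emph{transfer} system, which a singleton generally is not. Your approach buys generality --- the same argument shows $\ceil{X}^\boxslash = X^\boxslash$ for an arbitrary set $X$ of arrows, and it never needs the explicit generators of $\ceil{x \to y}$ --- while the paper's is shorter given that Rubin's description is already available in the surrounding text.
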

\begin{proof}
    Since $\{x \to y\} \subseteq \ceil{x \to y}$, we have $\ceil{x \to y}^\boxslash \subseteq \{x \to y\}^\boxslash$. In the other direction, let $a \to b \in \{x \to y\}^\boxslash$ be arbitrary. For any arrow $r \to s \in \ceil{x \to y}$, we have either $r = s$ (in which case $a \to b \in \{r \to s\}^\boxslash$) or $r \geq x$ and $s = y \vee r$. In the latter case, assume $r \leq a$ and $s \leq b$. We then conclude that $y \leq a$, and since $s = y \vee r$ we thus have $s \leq a$. So, $a \to b \in \{r \to s\}^\boxslash$.
\[\begin{tikzcd}[ampersand replacement=\&]
	x \& r \& a \\
	y \& s \& b
	\arrow[from=1-1, to=1-2]
	\arrow[from=1-1, to=2-1]
	\arrow[from=1-2, to=1-3]
	\arrow[from=1-3, to=2-3]
	\arrow[dashed, from=2-1, to=1-3]
	\arrow[from=2-1, to=2-2]
	\arrow[from=2-2, to=2-3]
\end{tikzcd}\]
Since $a \to b$ was arbitrary, we have $\{x \to y\}^\boxslash \subseteq \{r \to s\}^\boxslash$, and since $r \to s$ was arbitrary we have $\{x \to y\}^\boxslash \subseteq \ceil{x \to y}^\boxslash$, as desired.
\end{proof}

\begin{lemma}
    Let $L$ be a lattice, and let $x, y \in L$ with $x \leq y$. Then $\ceil{x \to y}^\boxslash$ is saturated if and only if $x = \bot$ or $x = y$.
\end{lemma}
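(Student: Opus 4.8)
The plan is to reduce everything to the explicit description of $\{x \to y\}^\boxslash$ furnished by the preceding lemma, which identifies $\ceil{x \to y}^\boxslash$ with $\{x \to y\}^\boxslash$. Unwinding the lifting condition, an arrow $a \to b$ (necessarily with $a \leq b$) lies in $\{x \to y\}^\boxslash$ exactly when ``$x \leq a$ and $y \leq b$'' together imply $y \leq a$. Since $\ceil{x \to y}$ is a cotransfer system, $\ceil{x \to y}^\boxslash$ is automatically a transfer system by \Cref{prop:boxslash adjunction}, so the only content of the statement is whether it satisfies the $2$-out-of-$3$ property; and for a transfer system this is equivalent to the single implication ``if $a \to c$ and $a \leq b \leq c$, then $b \to c$''.

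For the ``if'' direction I would treat the two cases separately. If $x = y$, then $x \to y$ is an identity arrow, so $(x \to y) \boxslash g$ holds vacuously for every arrow $g$; hence $\{x \to y\}^\boxslash$ is the complete transfer system (all arrows $a \to b$ with $a \leq b$), which is saturated. If $x = \bot$, then the description above simplifies to $\{\bot \to y\}^\boxslash = \{a \to b : a \leq b \text{ and } (y \leq b \Rightarrow y \leq a)\}$, because the clause $x \leq a$ now holds unconditionally. The $2$-out-of-$3$ check is then immediate: if $a \to c$ lies in this class and $a \leq b \leq c$, then $y \leq c$ forces $y \leq a \leq b$, so $b \to c$ lies in the class as well.

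For the ``only if'' direction I would argue by contraposition. Negating ``$x = \bot$ or $x = y$'' (and using $x \leq y$) gives $\bot < x < y$, and along the chain $\bot \leq x \leq y$ I would exhibit a failure of $2$-out-of-$3$. On one hand, $\bot \to y \in \{x \to y\}^\boxslash$: the hypothesis $x \leq \bot$ of the lifting implication fails (since $\bot < x$), so the implication holds vacuously. On the other hand, $x \to y \notin \{x \to y\}^\boxslash$: taking $a = x$ and $b = y$ in the lifting condition gives $x \leq x$ and $y \leq y$ but $y \nleq x$ (since $x < y$). So $\{x \to y\}^\boxslash$ contains $\bot \to y$ but not $x \to y$, violating $2$-out-of-$3$, and is therefore not saturated.

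The whole argument is a matter of carefully unwinding the definition of the lifting class, so I do not expect a genuine obstacle. The two spots that call for a little attention are the $x = \bot$ case, where one must actually perform the $2$-out-of-$3$ verification rather than appeal to any triviality, and the observation used in the ``only if'' direction that $\{x \to y\}^\boxslash$ never contains $x \to y$ itself when $x < y$ --- equivalently, that $f \boxslash f$ holds only for identity arrows $f$.
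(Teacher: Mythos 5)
Your proposal is correct and follows essentially the same route as the paper: identify $\ceil{x \to y}^\boxslash$ with $\{x \to y\}^\boxslash$, verify 2-out-of-3 directly in the cases $x = y$ and $x = \bot$, and in the remaining case use that $\bot \to y$ lies in the class while $x \to y$ does not (the paper phrases this as the direct implication rather than the contrapositive, but the content is identical). The only nitpick is that when $x = y$ the lifting implication is satisfied trivially (its conclusion follows from its hypothesis) rather than vacuously, which does not affect the argument.
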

\begin{proof}
    First, suppose $\ceil{x \to y}^\boxslash$ is saturated and $x \neq \bot$. Then
    \[\bot \to y \in \{x \to y\}^\boxslash = \ceil{x \to y}^\boxslash,\]
    so $x \to y \in \{x \to y\}^\boxslash$, which implies $x = y$.

    Next, if $x = y$, then $\ceil{x \to y}^\boxslash = L$ is a saturated transfer system.

    Finally, suppose $x = \bot$. Then
    \[\ceil{x \to y}^\boxslash = \{\bot \to y\}^\boxslash = \{a \to b : y \nleq b \text{ or } y \leq a\}.\]
    Let $a \leq b \leq c$ with $a \to c \in \ceil{x \to y}^\boxslash$. Then $y \nleq c$ or $y \leq a$. This implies $y \nleq c$ or $y \leq b$, i.e. $b \to c \in \ceil{x \to y}^\boxslash$.
\end{proof}

\begin{lemma}
    Let $L$ be a finite modular lattice. The join-irreducible saturated transfer systems on $L$ are precisely the transfer systems of the form $\floor{x \to y}$ for $x \to y \in \Cov(L)$.
\end{lemma}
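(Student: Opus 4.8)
The plan is to prove both inclusions of the claimed equality $J(\Sat(L)) = \{\floor{x \to y} : x \to y \in \Cov(L)\}$, using the standard fact that an element of a finite lattice is join-irreducible exactly when it has a unique lower cover, together with the structural lemmas already established about principal transfer systems $\floor{x \to y}$ and the correspondence between saturated transfer systems and saturated covers.

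For the inclusion $\supseteq$, fix a covering relation $x \lessdot y$ and set $\mathsf{T} = \floor{x \to y}$, which is saturated by the lemma characterizing when $\floor{x\to y}$ is saturated. I would exhibit $\mathsf{S} := \mathsf{T} \setminus \{x \to y\}$ as the unique lower cover of $\mathsf{T}$ in $\Sat(L)$. First, $\mathsf{S}$ is again a transfer system: by \Cref{lem: saturated principal is covering relation}, every arrow of $\floor{x\to y}$ is an identity or a covering relation, so this holds for $\mathsf{S}$ too and saturation of $\mathsf{S}$ is then automatic; closure under composition is trivial because no two non-identity arrows of $\floor{x\to y}$ are composable, and closure under restriction persists after deleting $x\to y$ because the non-identity arrows of $\floor{x\to y}$ with a fixed target are completely determined by that target. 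Then $\mathsf{S} \lessdot \mathsf{T}$ since they differ in a single arrow, and any $\mathsf{S}' \subsetneq \mathsf{T}$ in $\Sat(L)$ must omit $x \to y$ (otherwise, being a transfer system, $\mathsf{S}' \supseteq \floor{x\to y} = \mathsf{T}$), whence $\mathsf{S}' \subseteq \mathsf{S}$; this forces $\mathsf{S}' = \mathsf{S}$. So $\mathsf{T}$ has a unique lower cover and is not $\bot$, hence is join-irreducible.

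For the inclusion $\subseteq$, let $\mathsf{T} \in J(\Sat(L))$ and let $\mathcal{S} = \Cov(L) \cap \mathsf{T}$ be its set of covering relations. By the correspondence between saturated transfer systems and saturated covers, $\mathsf{T} = \floor{\mathcal{S}}$. I would then check that $\mathsf{T} = \bigvee_{\Sat}\{\floor{e} : e \in \mathcal{S}\}$: this join is the smallest saturated transfer system containing all the $\floor{e}$, equivalently containing $\mathcal{S}$; but $\floor{\mathcal{S}} = \mathsf{T}$ is already a transfer system containing $\mathcal{S}$ and is saturated, so it is that smallest one. Since $\mathsf{T}$ is join-irreducible in the finite lattice $\Sat(L)$ and $\mathcal{S}$ is finite and nonempty (as $\mathsf{T} \neq \bot$), it follows that $\mathsf{T} = \floor{e}$ for some $e \in \mathcal{S} \subseteq \Cov(L)$.

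The one delicate point — really the only subtlety — is to keep the two join operations distinct: joins in $\Sat(L)$ are not joins in $\Tr(L)$, so the assertion ``$\mathsf{T}$ is the $\Sat$-join of the principal systems $\floor{e}$ it contains'' must be justified inside $\Sat(L)$. The reconciliation is exactly that the transfer system generated by a saturated cover is already saturated, so that $\floor{\mathcal{S}}_{\mathrm{sat}} = \floor{\mathcal{S}}$ when $\mathcal{S}$ is the cover-set of a saturated transfer system. The only other fiddly step is the direct verification that $\floor{x \to y} \setminus \{x \to y\}$ is a transfer system, which is precisely where \Cref{lem: saturated principal is covering relation} is used.
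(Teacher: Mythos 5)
Your proof is correct, and while your forward inclusion ($J(\Sat(L)) \subseteq \{\floor{x\to y}\}$) is essentially identical to the paper's --- both write a join-irreducible $\mathsf{T}$ as the $\Sat$-join of the principal systems generated by its covering relations, using that $\floor{\mathcal{S}} = \floor{\mathcal{S}}_{\mathrm{sat}}$ when $\mathcal{S} = \mathsf{T} \cap \Cov(L)$ --- your reverse inclusion takes a genuinely different route. The paper shows $\floor{x\to y}$ is join-irreducible by computing the $\Sat$-join $\mathsf{T}_1 \vee^{\mathrm{sat}} \mathsf{T}_2$ explicitly as $\left((\mathsf{T}_1 \cup \mathsf{T}_2)^\circ\right)^\natural$ via \Cref{lem: saturated closure of transfer system} and Rubin's description of generated transfer systems, then chases the arrow $x \to y$ back through the closures to land it in $\mathsf{T}_1 \cup \mathsf{T}_2$. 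You instead exhibit $\floor{x\to y} \setminus \{x\to y\}$ as the unique lower cover, which is cleaner and avoids the closure computation entirely; the price is the direct verification that deleting the generating arrow leaves a saturated transfer system, which you correctly reduce to \Cref{lem: saturated principal is covering relation} (no two non-identity arrows are composable, the only non-identity arrow with target $y$ is $x \to y$, and 2-out-of-3 is vacuous when all non-identity arrows are covers). One small slip: after showing every $\mathsf{S}' \subsetneq \floor{x\to y}$ in $\Sat(L)$ satisfies $\mathsf{S}' \subseteq \mathsf{S}$, the conclusion is not that $\mathsf{S}' = \mathsf{S}$ but that $\mathsf{S}$ is the maximum of the elements strictly below $\floor{x\to y}$; that is what yields join-irreducibility, and the rest of your argument already supplies it.
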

\begin{proof}
    First, let $\mathsf{T} \in J(\Sat(L))$ be arbitrary. We know that $\mathsf{T} = \floor{\mathsf{T} \cap \Cov(L)} = \floor{\mathsf{T} \cap \Cov(L)}_\mathrm{sat}$, so
    \[\mathsf{T} = \bigvee_{c \to d \in \mathsf{T} \cap \Cov(L)}^\mathrm{sat} \floor{c \to d}_\mathrm{sat}\]
    where $\bigvee^\mathrm{sat}$ indicates that this join is taken in the lattice $\Sat(L)$. Since $\mathsf{T}$ is join-irreducible, we conclude that
    \[\mathsf{T} = \floor{c \to d}_\mathrm{sat}\]
    for some $c \to d \in \Cov(L)$. By \Cref{lem: when principal floor is saturated}, $\floor{c \to d}_\mathrm{sat} = \floor{c \to d}$, as desired.

    In the other direction, let $x \to y \in \Cov(L)$ be arbitrary (so that $\floor{x \to y}$ is saturated by \Cref{lem: when principal floor is saturated}), and suppose $\floor{x \to y} = \mathsf{T}_1 \vee^\mathrm{sat} \mathsf{T}_2$ for some $\mathsf{T}_1, \mathsf{T}_2 \in \Sat(L)$. In the notation of \Cref{lem: saturated closure of transfer system}, we have
    \[\mathsf{T}_1 \vee^\mathrm{sat} \mathsf{T}_2 = \floor{\mathsf{T}_1 \vee \mathsf{T}_2}_\mathrm{sat} = (\mathsf{T}_1 \vee \mathsf{T}_2)^\natural = \floor{\mathsf{T}_1 \cup \mathsf{T}_2}^\natural.\]
    We now employ \cite[Theorem A.2]{Rubin}, which tells us that\footnote{Rubin's theorem is stated specifically for subgroup lattices, but the same argument works for any finite lattice.}
    \[\floor{\mathsf{T}_1 \cup \mathsf{T}_2} = (\mathsf{T}_1 \cup \mathsf{T}_2)^\circ\]
    where $({-})^\circ$ denotes the reflexive-transitive closure of a binary relation. Now we have in particular that
    \[x \to y \in \left((\mathsf{T}_1 \cup \mathsf{T}_2)^\circ\right)^\natural,\]
    so there is some $w \leq x$ such that
    \[w \to y \in (\mathsf{T}_1 \cup \mathsf{T}_2)^\circ \subseteq \floor{x \to y}^\circ = \floor{x \to y}.\]
    Now, since $w \leq x < y$, we have $w \to y = z \wedge x \to z$ for some $z \leq y$, and thus $w = y \wedge x = x$. So far, we have established that
    \[x \to y \in (\mathsf{T}_1 \cup \mathsf{T}_2)^\circ.\]
    Thus, there is some $y \neq w' \geq x$ such that $w \to y \in T_1 \cup T_2$. Since $T_1 \cup T_2 \subseteq \floor{x \to y}$, we have $w = y \wedge x = x$. Thus, $x \to y \in \mathsf{T}_1 \cup \mathsf{T}_2$, so $\floor{x \to y} \subseteq \mathsf{T}_1$ or $\floor{x \to y} \subseteq \mathsf{T}_2$.
\end{proof}

\begin{lemma}
    Let $L$ be a finite modular lattice. The meet-irreducible saturated transfer systems on $L$ are precisely the transfer systems of the form $\ceil{\bot \to y}^\boxslash$ for $y \neq \bot$.    
\end{lemma}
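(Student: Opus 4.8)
The plan is to establish the two inclusions separately. For the inclusion $\{\ceil{\bot \to y}^\boxslash : y \neq \bot\} \subseteq M(\Sat(L))$, I would first observe that each such transfer system lies in $M(\Tr(L))$ by \cite[Summary 2.15]{BS} (applied to the relation $\bot < y$), is saturated by the preceding lemma characterizing when $\ceil{x \to y}^\boxslash$ is saturated, and is not all of $L$ (it omits the arrow $\bot \to y$ itself once $y \neq \bot$). It then remains to invoke the general fact that a subset $Q$ of a finite lattice $P$ which is closed under meets and contains $\top_P$ inherits meet-irreducibility: if $q \in Q$ is meet-irreducible in $P$ then $q \neq \top$ and $\bigwedge_P\{x \in P : x > q\} > q$, so a fortiori $\bigwedge_Q\{q' \in Q : q' > q\} > q$ (a smaller, still nonempty, index set), whence $q$ is meet-irreducible in $Q$. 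This applies with $P = \Tr(L)$ and $Q = \Sat(L)$, since meets in $\Sat(L)$ are intersections and $L \in \Sat(L)$.

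For the reverse inclusion I would prove the stronger statement that $\{\ceil{\bot \to y}^\boxslash : y \neq \bot\}$ is meet-dense in $\Sat(L)$; since the meet-irreducibles of a finite lattice lie in every meet-dense subset, this suffices. The main tool is, for $\mathsf{T} \in \Tr(L)$ and $y \in L$, the minimal source $\mathsf{T}^{-1}(y) := \min\{e : e \to y \in \mathsf{T}\}$, which is well-defined because $\{e : e \to y \in \mathsf{T}\}$ is nonempty and closed under $\wedge$ (by restriction followed by composition). Using the description $\ceil{\bot \to y}^\boxslash = \{c \to d : y \nleq d \text{ or } y \leq c\}$ recorded in a preceding lemma, one checks (via restriction) that $\ceil{\bot \to y}^\boxslash \supseteq \mathsf{T}$ if and only if $\mathsf{T}^{-1}(y) = y$. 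So, fixing $\mathsf{T} \in \Sat(L)$, it suffices to show that for every $a \to b \notin \mathsf{T}$ (with $a \leq b$) there is some $y$ with $\mathsf{T}^{-1}(y) = y$, $y \leq b$, and $y \nleq a$: such a $y$ is automatically $\neq \bot$, it satisfies $a \to b \notin \ceil{\bot \to y}^\boxslash$, and it keeps $\ceil{\bot \to y}^\boxslash \supseteq \mathsf{T}$, so that $\mathsf{T} = \bigcap\{\ceil{\bot \to y}^\boxslash : y \neq \bot,\ \ceil{\bot \to y}^\boxslash \supseteq \mathsf{T}\}$.

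Producing this $y$ is the heart of the argument and the step I expect to be the main obstacle, and it is where saturation of $\mathsf{T}$ is used. I would argue by contradiction: if no such $y$ exists, then every $y$ with $y \leq b$ and $y \nleq a$ satisfies $\mathsf{T}^{-1}(y) < y$. Since $a \to b \notin \mathsf{T}$ forces $b \nleq a$, the element $b$ is eligible, giving $\mathsf{T}^{-1}(b) < b$; and saturation forbids $\mathsf{T}^{-1}(b) \leq a$, because $\mathsf{T}^{-1}(b) \leq a \leq b$ together with $\mathsf{T}^{-1}(b) \to b \in \mathsf{T}$ would force $a \to b \in \mathsf{T}$. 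Thus $y_1 := \mathsf{T}^{-1}(b)$ is again eligible, so $\mathsf{T}^{-1}(y_1) < y_1$; but composing $\mathsf{T}^{-1}(y_1) \to y_1 \to b$ shows $\mathsf{T}^{-1}(y_1) \to b \in \mathsf{T}$, hence $\mathsf{T}^{-1}(b) \leq \mathsf{T}^{-1}(y_1) < y_1 = \mathsf{T}^{-1}(b)$, a contradiction.

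Two closing remarks about the shape of the argument. First, this proof uses only that $\mathsf{T}$ is a saturated transfer system and never invokes modularity of $L$ (which entered the companion statement on $J(\Sat(L))$ through \Cref{lem: saturated principal is covering relation}), so the lemma in fact holds for arbitrary finite lattices. Second, assembling the two inclusions together with \cite[Summary 2.15]{BS} and the lemma on when $\ceil{x \to y}^\boxslash$ is saturated yields the identity $M(\Sat(L)) = M(\Tr(L)) \cap \Sat(L)$ asserted in \Cref{thm: classification of J(Sat) and M(Sat)}.
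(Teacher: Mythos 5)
Your proof is correct, but the second (harder) inclusion follows a genuinely different route from the paper's. For the containment $\{\ceil{\bot \to y}^\boxslash : y \neq \bot\} \subseteq M(\Sat(L))$ you argue exactly as the paper does: meets in $\Sat(L)$ are intersections, so membership in $M(\Tr(L))$ (which holds by \cite[Summary 2.15]{BS}) passes down to the meet-closed subposet $\Sat(L)$. For the reverse containment, the paper leans on an external input, \cite[Corollary 4.2]{bose_combinatorics_2025}, which says that the left lifting class ${}^\boxslash\mathsf{T}$ of a meet-irreducible saturated transfer system is generated by arrows out of $\bot$; it then transports this through the Galois connection of \Cref{prop:boxslash adjunction} to write $\mathsf{T}$ as a meet of the systems $\ceil{\bot \to x_i}^\boxslash$ and invokes meet-irreducibility. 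You instead prove directly that the family $\{\ceil{\bot \to y}^\boxslash : y \neq \bot\}$ is meet-dense in $\Sat(L)$, via the minimal-source function $\mathsf{T}^{-1}(y)$ and the observation that $\mathsf{T} \subseteq \ceil{\bot \to y}^\boxslash$ iff $\mathsf{T}^{-1}(y) = y$; the descent $b \rightsquigarrow \mathsf{T}^{-1}(b)$ together with saturation then separates any $a \to b \notin \mathsf{T}$, and your contradiction argument is sound. This buys two things: the argument is self-contained (no appeal to \cite{bose_combinatorics_2025}), and, as you note, it never uses modularity of $L$, so this half of \Cref{thm: classification of J(Sat) and M(Sat)} holds for arbitrary finite lattices --- modularity is only needed on the join-irreducible side. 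The trade-off is that the paper's route exposes the structural fact about ${}^\boxslash\mathsf{T}$, which is of independent interest.
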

\begin{proof}
    For $y \neq \bot$, $\ceil{\bot \to y}^\boxslash$ is a non-maximum element of $\Sat(L)$. If $\ceil{\bot \to y}^\boxslash = \mathsf{T}_1 \cap \mathsf{T}_2$, then $\ceil{\bot \to y}^\boxslash \in M(\Tr(L))$ implies that $\ceil{\bot \to y}^\boxslash = \mathsf{T}_1$ or $\ceil{\bot \to y}^\boxslash = \mathsf{T}_2$, as desired.

    By \cite[Corollary 4.2]{bose_combinatorics_2025}, if $\mathsf{T}$ is a meet-irreducible saturated transfer system on $L$, then
    \[{}^\boxslash \mathsf{T} = \lceil \{\bot \to x_i : i\} \rceil = \bigvee_i \lceil \bot \to x_i \rceil\]
    for some collection of objects $\{x_i\}_i$, where this join is performed in the lattice of cotransfer systems on $L$. Now
    \[\mathsf{T} = ({}^\boxslash \mathsf{T})^\boxslash = \left(\bigvee_i \lceil \bot \to x_i \rceil\right)^\boxslash = \bigwedge_i \lceil \bot \to x_i \rceil^\boxslash,\]
    where this meet is performed in the lattice of transfer systems on $L$ (recalling \Cref{prop:boxslash adjunction}). Since each $\lceil \bot \to x_i \rceil^\boxslash$ is saturated, this meet is equivalently performed in the lattice of saturated transfer systems on $L$. Now since $\mathsf{T}$ is meet-irreducible, we have
    \[\mathsf{T} = \lceil \bot \to y \rceil^\boxslash\]
    for some $y \in L$. If $y = \bot$ then $\mathsf{T} = L$, which is a contradiction.
\end{proof}

The above two lemmas, taken together, give precisely \Cref{thm: classification of J(Sat) and M(Sat)}. We now understand the elements of $J(\Sat(L))$ and $M(\Sat(L))$. Finally, we must determine the relation $\subseteq$ on $J(\Sat(L)) \times M(\Sat(L))$; since $J(\Sat(L)) \subseteq J(\Tr(L))$ and $M(\Sat(L)) \subseteq M(\Tr(L))$, this is immediate from the work in \cite{BS}.

\begin{lemma}[{\cite[Theorem 2.14]{BS}}]
    Let $L$ be a lattice, and let $a, b, y \in L$ be such that $a < b$ and $y \neq \bot$. Then
    \[\floor{a \to b} \subseteq \ceil{\bot \to y}^\boxslash\]
    if and only if
    \[y \nleq b \text{ or } y \leq a.\]
\end{lemma}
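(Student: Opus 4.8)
The plan is to unwind both sides into the explicit descriptions already established in the excerpt and then compare. On the right, the lemma computing $\ceil{\bot \to y}^\boxslash$ gives
\[\ceil{\bot \to y}^\boxslash = \{\bot \to y\}^\boxslash = \{c \to d : y \nleq d \text{ or } y \leq c\}.\]
On the left, Rubin's Proposition (applied to the singleton $X = \{a \to b\}$) gives that $\floor{a \to b}$ is the reflexive closure of $\{z \wedge a \to z : z \leq b\}$. An identity arrow $c \to c$ trivially lies in $\ceil{\bot \to y}^\boxslash$ (the condition "$y \nleq c$ or $y \leq c$" always holds), so the containment $\floor{a \to b} \subseteq \ceil{\bot \to y}^\boxslash$ is equivalent to: for every $z \leq b$, one has $y \nleq z$ or $y \leq z \wedge a$.

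For the backward implication, I would assume $y \nleq b$ or $y \leq a$ and verify this condition. If $y \nleq b$, then no $z \leq b$ can satisfy $y \leq z$, so $y \nleq z$ holds for all such $z$. If $y \leq a$, then whenever $y \leq z$ we get $y \leq z \wedge a$; either way the condition is satisfied, so the containment holds.

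For the forward implication, I would assume the containment and take $z = b$ in the generating description of $\floor{a \to b}$: the arrow $b \wedge a \to b$, i.e.\ $a \to b$ itself, lies in $\floor{a \to b}$ and hence in $\ceil{\bot \to y}^\boxslash$. Applying the explicit description of $\ceil{\bot \to y}^\boxslash$ to this arrow yields exactly "$y \nleq b$ or $y \leq a$", as desired. I do not anticipate a genuine obstacle here: the only point requiring a word of care is that the reflexive-closure step in Rubin's description introduces identity arrows, but these are harmless as noted above, so the argument is a direct unwinding of the two generating-set descriptions. (Alternatively, the statement is the special case $G$ trivial of \cite[Theorem 2.14]{BS}.)
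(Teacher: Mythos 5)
Your proposal is correct. The paper itself gives no proof of this lemma---it simply cites \cite[Theorem 2.14]{BS}, whose general form (stated earlier in the paper) specializes under trivial $G$ and $x = \bot$ to exactly the claimed condition, as you note. Your direct unwinding of the two generating-set descriptions ($\floor{a \to b}$ as the reflexive closure of $\{z \wedge a \to z : z \leq b\}$, and $\ceil{\bot \to y}^\boxslash = \{c \to d : y \nleq d \text{ or } y \leq c\}$) is a valid self-contained verification and is essentially the same computation that underlies the cited result.
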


We now have in place a complete description of the reduced formal context of $\Sat(L)$ for any finite modular lattice $L$. \Cref{fig:C53context} shows the case $L = \Sub(C_5^3)$. Using the FCA software tool \texttt{PCbO} \cite{pcbo}, one obtains an enumeration of saturated transfer systems on the elementary abelian group $C_5^3$.

\begin{figure}[h]
    \centering
    \includegraphics[width=0.3\linewidth]{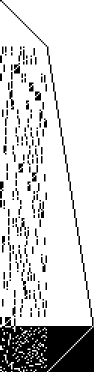}
    \caption{The reduced formal context for the lattice $\Sat(\Sub(C_5^3))$. A black pixel represents a $0$ in the binary matrix and a white pixel represents a $1$.}
    \label{fig:C53context}
\end{figure}

\begin{theorem}[cf. Theorem B]
    There are exactly $13,784,538,270,571$ saturated transfer systems on $C_5^3$.
\end{theorem}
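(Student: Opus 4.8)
The proof is a finite computation underwritten by \Cref{letterthm:formal context}. The plan is: (i) describe $L = \Sub(C_5^3)$ explicitly; (ii) read off from \Cref{letterthm:formal context} the reduced formal context of $\Sat(L)$ as a concrete binary matrix; (iii) apply an FCA tool to that matrix, which by the Fundamental Theorem of FCA reconstructs $\Sat(L)$, so that counting formal concepts yields $\lvert\Sat(L)\rvert$. For (i): identifying $C_5^3$ with $\mathbb{F}_5^3$, the lattice $\Sub(C_5^3)$ is the modular lattice of linear subspaces, and the number of $k$-dimensional subspaces is the Gaussian binomial $\qbinom{3}{k}{5}$, i.e.\ $1, 31, 31, 1$ for $k = 0,1,2,3$, so $\lvert L\rvert = 64$.

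For (ii): by \Cref{letterthm:formal context}, $J(\Sat(L))$ is indexed by the covering relations of $L$ and $M(\Sat(L))$ by the nonzero subspaces. The covering relations are the inclusions $H \subsetneq K$ with $\dim K = \dim H + 1$; sorting them by dimension ($0 \subset 1$; $1 \subset 2$, where each line lies in $\qbinom{2}{1}{5} = 6$ planes; $2 \subset 3$) gives $31 + 31\cdot 6 + 31 = 248$ of them, so $\lvert J(\Sat(L))\rvert = 248$. There are $64 - 1 = 63$ nonzero subspaces, and the transfer systems $\ceil{\bot \to X}^\boxslash$ are pairwise distinct (for instance, $\ceil{\bot \to X}^\boxslash$ omits the arrow $\bot \to X$, whereas $\ceil{\bot \to X'}^\boxslash$ contains it whenever $X' \nleq X$), so $\lvert M(\Sat(L))\rvert = 63$. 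The reduced formal context is then the $248 \times 63$ binary matrix with a $1$ in position $(\floor{H \to K}, \ceil{\bot \to X}^\boxslash)$ precisely when $X \nleq K$ or $X \leq H$; assembling it only requires enumerating the subspaces of $\mathbb{F}_5^3$ and performing inclusion tests. This is the matrix shown in \Cref{fig:C53context}.

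For (iii): we run \texttt{PCbO} on this context and tally the resulting formal concepts. Since the matrix coming from $J(\Sat(L)) \times M(\Sat(L))$ is reduced, the concept lattice it produces is exactly $\Sat(L)$, and the count is $13{,}784{,}538{,}270{,}571$.

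The main obstacle is purely computational: $\Sat(\Sub(C_5^3))$ has on the order of $1.4 \times 10^{13}$ elements, far too many to store, so one needs a fast parallel Close-by-One--style enumeration that merely counts concepts, together with substantial HPC time, and one must have confidence that the long-running count is not corrupted. A secondary point is verifying that the context has been set up correctly; this can be cross-checked by running the same pipeline on $\Sub(C_p^2)$ for small $p$ against the closed formula of \cite{bao_transfer_2025}, and on $\Sub(C_2^n)$ and other small lattices against direct enumeration of saturated transfer systems.
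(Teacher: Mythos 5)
Your proposal is correct and follows exactly the paper's route: use Theorem~\ref{letterthm:formal context} to write down the reduced formal context of $\Sat(\Sub(C_5^3))$ (your counts $\lvert J\rvert = 248$ and $\lvert M\rvert = 63$ agree with the paper's formulas $\qbinom{3}{1}{5}\,a_{2,5} = 31\cdot 8$ and $\sum_{i=1}^{3}\qbinom{3}{i}{5} = 63$), then run \texttt{PCbO} on that context and count concepts. This is precisely how the paper obtains the number $13{,}784{,}538{,}270{,}571$.
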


\section{Density}\label{sec:density}

For $C_p^n$, there are

\begin{equation}
\label{eq:number-meet-irr-sat} \lvert M(\Sat(\Sub(C_p^n))) \rvert = \sum_{i=1}^n \qbinom{n}{i}{p}
\end{equation}

meet-irreducible saturated transfer systems, where $\qbinom{n}{i}{p}$ denotes the ``$p$-binomial coefficient''\footnote{These are also known as ``Gaussian binomial coefficients''. They are so named for the following reason: one can make sense of the expression defining $\qbinom{n}{i}{p}$ for any real number $p > 1$, and for fixed $n,i$ one has $\lim_{p \to 1^+} \qbinom{n}{i}{p} = \binom{n}{i}$.}
\[\qbinom{n}{i}{p} := \prod_{j=0}^{i-1} \frac{p^n - p^j}{p^i - p^j}.\]
$\qbinom{n}{i}{p}$ is the number of $i$-dimensional subspaces of the $\mathbb{F}_p$-vector space $\mathbb{F}_p^n$. The fact that the number of meet-irreducible saturated transfer systems is equal to the number of nontrivial subgroups of $C_p^n$ thus directly yields \eqref{eq:number-meet-irr-sat} above.

The total number of subspaces of $\mathbb{F}_p^n$, given by $\sum_{i=0}^n \qbinom{n}{i}{p}$, will be denoted $a_{n,p}$. We now record some useful facts about the quantities $\qbinom{n}{i}{p}$ and $a_{n,p}$.

\begin{lemma}\label{lem:number of subspaces recursion}
    For any natural number $n$ and any prime number $p$,
    \[a_{n+2,p} = 2a_{n+1,p} + (p^{n+1}-1)a_{n,p}.\]
\end{lemma}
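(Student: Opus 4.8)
The plan is to prove the recursion by introducing a single auxiliary sequence and deriving two coupled first-order recursions which together are equivalent to the claimed second-order one. Concretely, set $c_{n,p} := \sum_{i=0}^{n} p^i \qbinom{n}{i}{p}$. First I would record the two standard $p$-Pascal identities for Gaussian binomials: for all $n$ and $k$,
\[\qbinom{n+1}{k}{p} = \qbinom{n}{k-1}{p} + p^{k}\qbinom{n}{k}{p} \qquad\text{and}\qquad \qbinom{n+1}{k}{p} = p^{\,n+1-k}\qbinom{n}{k-1}{p} + \qbinom{n}{k}{p},\]
both of which follow immediately from the product formula defining $\qbinom{n}{k}{p}$ (or, combinatorially, from counting $k$-dimensional subspaces of $\mathbb{F}_p^{n+1}$ relative to a fixed hyperplane), and either of which may simply be cited.

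Summing the first identity over $k$ gives $a_{n+1,p} = a_{n,p} + c_{n,p}$, using $\sum_k \qbinom{n}{k-1}{p} = a_{n,p}$ and $\sum_k p^k \qbinom{n}{k}{p} = c_{n,p}$. Next, multiplying the second identity by $p^k$ and summing over $k$ gives $c_{n+1,p} = c_{n,p} + p^{\,n+1} a_{n,p}$, since $p^k \cdot p^{\,n+1-k}\qbinom{n}{k-1}{p} = p^{\,n+1}\qbinom{n}{k-1}{p}$ and again $\sum_k \qbinom{n}{k-1}{p} = a_{n,p}$.

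Finally I would eliminate $c$: from the first relation, $c_{n,p} = a_{n+1,p} - a_{n,p}$ and hence also $c_{n+1,p} = a_{n+2,p} - a_{n+1,p}$; substituting these into $c_{n+1,p} = c_{n,p} + p^{\,n+1}a_{n,p}$ yields $a_{n+2,p} - a_{n+1,p} = (a_{n+1,p} - a_{n,p}) + p^{\,n+1}a_{n,p}$, which rearranges to exactly $a_{n+2,p} = 2a_{n+1,p} + (p^{\,n+1}-1)a_{n,p}$.

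There is no genuine obstacle here — once the two Pascal identities are in hand the argument is routine bookkeeping — the only point requiring a moment's thought is recognizing that the single weighted sum $c_{n,p}$ closes the system; for instance, summing the second Pascal identity \emph{without} the weight $p^k$ would introduce a different auxiliary sum and fail to close. A more combinatorial variant would instead count subspaces $W$ of $\mathbb{F}_p^{n+2}$ according to whether $W$ is contained in a fixed hyperplane $H$, and if not, according to $W \cap H$; this recovers $a_{n+2,p} = a_{n+1,p} + c_{n+1,p}$ directly, but one would still need $c_{n+1,p} = c_{n,p} + p^{\,n+1}a_{n,p}$, so I would favor the purely algebraic version above.
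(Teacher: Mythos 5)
Your proof is correct, but it takes a genuinely different route from the paper. The paper argues combinatorially: it fixes a nonzero vector $v \in \mathbb{F}_p^{n+2}$, counts the subspaces containing $v$ via the quotient $\mathbb{F}_p^{n+2}/\operatorname{span}(v)$, counts the $d$-dimensional subspaces avoiding $v$ as $p^d\qbinom{n+1}{d}{p}$, and then evaluates the resulting sum $\sum_d p^d \qbinom{n+1}{d}{p}$ by a second double-counting argument (the multiset of vectors lying in subspaces of $\mathbb{F}_p^{n+1}$). Your argument replaces all of this with the two $q$-Pascal identities and the auxiliary sequence $c_{n,p} = \sum_i p^i\qbinom{n}{i}{p}$; the two first-order recursions $a_{n+1,p} = a_{n,p} + c_{n,p}$ and $c_{n+1,p} = c_{n,p} + p^{n+1}a_{n,p}$ you derive are exactly the relations the paper establishes bijectively (its split by whether a subspace contains $v$ is your first relation, and its multiset count is equivalent to your second), so the underlying structure is the same even though the justifications differ. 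Your version is self-contained modulo the standard Pascal identities and avoids any appeal to an external combinatorial reference, at the cost of obscuring the meaning of $c_{n,p}$ (the number of subspaces of $\mathbb{F}_p^{n+1}$ missing a fixed nonzero vector), which the paper's proof makes transparent. The only minor point worth making explicit in a write-up is the boundary convention $\qbinom{n}{-1}{p} = \qbinom{n}{n+1}{p} = 0$ used when reindexing the sums.
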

The following proof is adapted from \cite{critzer_combinatorics_2018} -- we reproduce the proof here for completeness.
\begin{proof}
    Fix $n$ and $p$, and moreover fix a nonzero vector $v \in \mathbb{F}_p^{n+2}$. The number of subspaces of $\mathbb{F}_p^{n+2}$ which contain $v$ is equal to the number of subspaces of $\mathbb{F}_p^{n+2}/\operatorname{span}(v) \cong \mathbb{F}_p^{n+1}$, which by definition equals $a_{n+1,p}$.
    
    A $d$-dimensional subspace of $\mathbb{F}_p^{n+2}$ which does \emph{not} contain $v$ has a basis which is a subset of $\mathbb{F}_p^{n+2} \setminus \operatorname{span}(v)$. Thus, there are
    \[\frac{(p^{n+2}-p)\dots(p^{n+2}-p^d)}{(p^d-1)\dots(p^d-p^{d-1})} = p^d \qbinom{n+1}{d}{p}\]
    such subspaces.

    Now we know that the total number of subspaces of $\mathbb{F}_p^{n+2}$ which do not contain $v$ is
    \[\sum_{d=0}^{n+1} p^d \qbinom{n+1}{d}{p}.\]
    We note that this is also equal to the cardinality of the multiset
    \[\coprod_{W \leq \mathbb{F}_p^{n+1}} W,\]
    i.e. the total number of vectors appearing in all proper subspaces of $\mathbb{F}_p^{n+1}$, counted with multiplicity.

    Fix a nonzero vector $w \in \mathbb{F}_p^{n+1}$. By our prior work, the number of times $w$ appears in the above multiset is $a_{n,p}$. Thus, the total number of nonzero vectors in the above multiset is $(p^{n+1}-1)a_{n,p}$. Since the zero vector is an element of every subspaces of $\mathbb{F}_p^{n+1}$, the number of times the zero vector appears in the above multiset is $a_{n+1,p}$. Thus,
    \[\sum_{d=0}^{n+1} p^d \qbinom{n+1}{d}{p} = a_{n+1,p} + (p^{n+1}-1)a_{n,p},\]
    and we conclude that
    \[a_{n+2,p} = a_{n+1,p} + a_{n+1,p} + (p^{n+1}-1)a_{n,p},\]
    as desired.
\end{proof}

\begin{corollary}
    \label{cor:bounds on a}
    For all natural numbers $n$ and all prime numbers $p$,
    \[p^{(n^2-1)/4} \leq a_{n,p} \leq p^{(n+1.1)^2/4}.\]
\end{corollary}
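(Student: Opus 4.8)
The plan is to prove the two inequalities separately, each by induction on $n$ built on the two-step recursion $a_{n+2,p} = 2a_{n+1,p} + (p^{n+1}-1)a_{n,p}$ from \Cref{lem:number of subspaces recursion}. Since both target exponents are quadratic polynomials in $n$ and the recursion relates index $n+2$ to indices $n+1$ and $n$, the natural setup is: verify a handful of small cases $n = 0, 1, 2, \dots$ by hand (using $a_{0,p} = 1$, $a_{1,p} = 2$, $a_{2,p} = p+3$, and $a_{3,p} = 2p^2+2p+4$), then run the inductive step from $\{n,n+1\}$ to $n+2$.

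For the lower bound, write $g(n) = (n^2-1)/4$ and assume $a_{n,p} \geq p^{g(n)}$ and $a_{n+1,p} \geq p^{g(n+1)}$. Substituting into the recursion and dividing through by $p^{g(n)}$ reduces the desired bound $a_{n+2,p} \geq p^{g(n+2)}$ to
\[2p^{g(n+1)-g(n)} + p^{n+1} - 1 \geq p^{g(n+2)-g(n)}.\]
A short computation gives $g(n+2)-g(n) = n+1$ and $g(n+1)-g(n) = (2n+1)/4 \geq 0$, so the inequality collapses to $2p^{(2n+1)/4} \geq 1$, which is immediate. The base cases $n \in \{0,1\}$ are $1 \geq p^{-1/4}$ and $2 \geq p^0$. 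This half should be entirely routine.

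For the upper bound, write $h(n) = (n+1.1)^2/4$ and assume $a_{n,p} \leq p^{h(n)}$ and $a_{n+1,p} \leq p^{h(n+1)}$. Substituting and dividing by $p^{h(n)}$ reduces $a_{n+2,p} \leq p^{h(n+2)}$ to
\[2p^{h(n+1)-h(n)} + p^{n+1} - 1 \leq p^{h(n+2)-h(n)},\]
and computing $h(n+1)-h(n) = (n+1.6)/2$ and $h(n+2)-h(n) = n+2.1$ turns this into $2p^{(n+1.6)/2} + p^{n+1} - 1 \leq p^{n+2.1}$. For $n \geq 2$ and $p \geq 2$ one has $p^{(n+0.4)/2} \geq 2^{1.2} > 2$, hence $2p^{(n+1.6)/2} \leq p^{(n+0.4)/2}\cdot p^{(n+1.6)/2} = p^{n+1}$; combining this with the elementary inequality $2 \leq 2^{1.1} \leq p^{1.1}$ gives $2p^{(n+1.6)/2} + p^{n+1} - 1 < 2p^{n+1} \leq p^{1.1}\cdot p^{n+1} = p^{n+2.1}$, closing the step. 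It then remains to verify the base cases $n \in \{0,1,2,3\}$ directly; all but one are handled by crude estimates (e.g.\ $a_{3,p} \leq 4p^2 \leq p^4 \leq p^{h(3)}$ for $p \geq 2$), with the single genuinely numerical check being $a_{2,2} = 5 \leq 2^{2.4025}$.

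The main obstacle is precisely this last point: the constant $1.1$ is essentially forced, since the cleaner exponent $(n+1)^2/4$ already fails at $n = 2$, $p = 2$ (where $a_{2,2} = 5 > 2^{9/4} \approx 4.76$), so some fudge factor must be introduced, and one must then confirm that the induction still closes with $n \geq 2$ in the step together with a finite check of small-$p$ base cases. Conceptually, the reason a single $p$-independent constant like $1.1$ suffices is the inequality $2^{1.1} > 2$, which lets the factor $2$ appearing in the recursion be absorbed uniformly over all primes $p$; once that observation is in hand, the rest is bookkeeping with quadratic exponents.
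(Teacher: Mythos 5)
Your proof is correct and follows essentially the same route as the paper's: both bounds are established by induction on the two-step recursion of \Cref{lem:number of subspaces recursion}, with the same exponents, the same reduction of the inductive step to an inequality of powers of $p$, and the same base cases $n \leq 3$ (resp.\ $n \leq 1$). In fact your upper-bound step is slightly tighter than the paper's: you absorb the factor $2$ via $2p^{(n+1.6)/2} \leq p^{n+1}$ and then $2 \leq p^{1.1}$, whereas the paper's displayed chain ends by bounding $3p^{(n+1.1)^2/4+n+1}$ by $p^{(n+3.1)^2/4}$, which requires $3 \leq p^{1.1}$ and fails for $p=2$ (and its intermediate exponent $n/2+1.6$ should read $n/2+0.8$), so your version actually repairs a small slip in the paper's argument.
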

\begin{proof}
    Fix a prime number $p$.
    
    We first establish the upper bound by induction on $n$. We establish the base cases for $n \leq 3$ directly:
    \[a_{0,p} = 1  = p^0 \leq p^{1.1^2/4}\]
    \[a_{1,p} = 2 \leq p \leq p^{1.1025} = p^{2.1^2/4}\]
    \[a_{2,p} = p+3 \leq p^{3.1^2/4}\]
    \[a_{3,p} = 2p^2+2p+4 \leq p^{4.1^2/4}\]
    where the last two inequalities can be established with basic calculus, using the fact that $p \geq 2$.
    For the inductive step, let $n \geq 2$. Then we have
    \begin{multline*}
        a_{n+2,p}
        = 2a_{n+1,p} + (p^{n+1}-1)a_{n,p}
        \leq 2p^{(n+2.1)^2/4} + (p^{n+1}-1)p^{(n+1.1)^2/4} \\
        = p^{(n+1.1)^2/4} \left( 2p^{n/2 + 1.6} + p^{n+1} - 1 \right)
        \leq p^{(n+1.1)^2/4} \left( 2p^{n+1} + p^{n+1} \right) \\
        = 3p^{(n+1.1)^2/4 + n + 1}
        \leq p^{(n+3.1)^2/4},
    \end{multline*}
    as desired.

    Next, we establish the lower bound, again by induction on $n$. We establish the bases cases for $n \leq 1$ directly:
    \[a_{0,p} = 1 \geq p^{-0.25} = p^{(0^2-1)/4}\]
    \[a_{1,p} = 2 \geq 1 = p^{(1^2-1)/4}\]
    For the inductive step, let $n \geq 0$. Then we have
    \begin{multline*}
        a_{n+2,p}
        = 2a_{n+1,p} + (p^{n+1}-1)a_{n,p}
        \geq 2p^{((n+1)^2-1)/4} + (p^{n+1}-1)p^{(n^2-1)/4} \\
        = p^{(n^2-1)/4} \left( 2p^{(2n+1)/4} + p^{n+1} - 1 \right)
        \geq p^{(n^2-1)/4} p^{n+1} = p^{((n+2)^2-1)/4},
    \end{multline*}
    as desired.
\end{proof}

\begin{proposition}
    The number of join-irreducible saturated transfer systems is
    \[\sum_{i=0}^{n-1} \qbinom{n}{i}{p}\qbinom{n-i}{1}{p} = \sum_{d=1}^n \qbinom{n}{d}{p} \qbinom{d}{1}{p} = \qbinom{n}{1}{p} a_{n-1,p}.\]
\end{proposition}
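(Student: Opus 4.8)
The plan is to reduce the count to a count of covering relations in the subspace lattice and then evaluate it by stratifying on dimension. By \Cref{thm: classification of J(Sat) and M(Sat)}, the join-irreducible saturated transfer systems on $L = \Sub(C_p^n)$ are exactly the transfer systems $\floor{H \to K}$ as $H < K$ ranges over $\Cov(L)$; moreover, since $J(\Sat(L)) \subseteq J(\Tr(L))$ and, by \cite[Summary 2.15]{BS}, the assignment $(x<y) \mapsto \floor{x \to y}$ is a bijection from the non-identity relations of $L$ onto $J(\Tr(L))$, distinct covering relations yield distinct join-irreducibles. Hence $\lvert J(\Sat(L)) \rvert = \lvert \Cov(L) \rvert$, and it remains to count the covering relations in the lattice of subspaces of $\mathbb{F}_p^n$, i.e.\ the pairs of subspaces $H \subset K$ with $\dim K = \dim H + 1$.

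First I would stratify by $i := \dim H \in \{0, \dots, n-1\}$. There are $\qbinom{n}{i}{p}$ choices of $H$, and the subspaces $K$ covering $H$ correspond bijectively to the lines in the quotient $\mathbb{F}_p^n / H \cong \mathbb{F}_p^{n-i}$, of which there are $\qbinom{n-i}{1}{p}$. Summing gives $\lvert \Cov(L) \rvert = \sum_{i=0}^{n-1} \qbinom{n}{i}{p}\qbinom{n-i}{1}{p}$, which is the first expression. Re-stratifying instead by $d := \dim K$ — choosing $K$ in $\qbinom{n}{d}{p}$ ways and then a hyperplane $H$ of $K$, equivalently a line in the dual $K^\vee$, in $\qbinom{d}{1}{p}$ ways — gives the middle expression; equivalently, one verifies the elementary $q$-binomial identity $\qbinom{n}{i}{p}\qbinom{n-i}{1}{p} = \qbinom{n}{i+1}{p}\qbinom{i+1}{1}{p}$ and reindexes by $d = i+1$.

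For the last equality I would invoke the identity $\qbinom{n}{d}{p}\qbinom{d}{1}{p} = \qbinom{n}{1}{p}\qbinom{n-1}{d-1}{p}$, which admits the bijective proof that both sides count pairs $(\ell, K)$ with $\ell$ a line in $\mathbb{F}_p^n$, $\ell \subseteq K$, and $\dim K = d$ (count $K$ and then $\ell$ on the left; count $\ell$ and then $K/\ell \leq \mathbb{F}_p^n/\ell$ on the right). Summing over $d = 1, \dots, n$ then yields
\[
\sum_{d=1}^n \qbinom{n}{d}{p}\qbinom{d}{1}{p} = \qbinom{n}{1}{p}\sum_{d=1}^n \qbinom{n-1}{d-1}{p} = \qbinom{n}{1}{p}\, a_{n-1,p},
\]
using $a_{n-1,p} = \sum_{j=0}^{n-1}\qbinom{n-1}{j}{p}$.

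There is no serious obstacle here: the entire content is the identification $\lvert J(\Sat(L)) \rvert = \lvert \Cov(L) \rvert$, which is immediate from \Cref{thm: classification of J(Sat) and M(Sat)} together with the injectivity of $(x<y) \mapsto \floor{x \to y}$ from \cite{BS}, and the rest is routine bookkeeping with Gaussian binomial coefficients. The only point worth stating with care is that the three displayed expressions are genuinely equal as functions of $n$ and $p$, for which the two $q$-binomial identities above suffice.
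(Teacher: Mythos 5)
Your proposal is correct and follows essentially the same route as the paper: both reduce the count to covering relations in $\Sub(C_p^n)$ stratified by dimension and establish the final equality by double-counting pairs consisting of a line and a subspace containing it. The only (welcome) difference is that you spell out explicitly why $\lvert J(\Sat(L))\rvert = \lvert \Cov(L)\rvert$ via the injectivity of $(x<y)\mapsto\floor{x\to y}$, a step the paper leaves implicit after \Cref{thm: classification of J(Sat) and M(Sat)}.
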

\begin{proof}
    To start we note certain properties of the ``$p$-binomial coefficient" \(\binom{n}{i}_p\). We have \(\binom{n}{1}_p = \frac{p^n - 1}{p - 1}\).
    By duality between $\mathbb{F}_p^n$ and its dual space, \[\binom{n}{k}_p = \binom{n}{n-k}_p.\]
    Setting $d = n-i$, \[\sum_{i=0}^{n-1} \qbinom{n}{i}{p}\qbinom{n-i}{1}{p}\]
    becomes
    \[\sum_{d=1}^{n} \qbinom{n}{n-d}{p}\qbinom{d}{1}{p},\]
    which can also be written as
    \[\sum_{d=1}^{n} \qbinom{n}{d}{p}\qbinom{d}{1}{p}.\]

    Next, we wish to show that $\sum_{d=1}^n \qbinom{n}{d}{p} \qbinom{d}{1}{p} = \qbinom{n}{1}{p} a_{n-1,p}$. The left-hand side is also the number of pairs $(A,B)$ of subspaces of $\mathbb{F}_p^n$ such that $\dim A = 1$ and $A \leq B$. The right-hand size is the number of pairs $(A,C)$ where $A$ is a $1$-dimensional subspace of $\mathbb{F}_p^n$ and $C$ is a subspace of $\mathbb{F}_p^n/A$. By the correspondence theorem, these numbers agree.
\end{proof}

We have a $0$ in the reduced formal context for saturated transfer systems on $\operatorname{Sub}(C_p^n)$ for each triple $(A,B,K)$ of subgroups of $C_p^n$ where:
\begin{itemize}
    \item $A \leq B$ and $[B : A] = p$ (i.e. $A \to B$ is a direct edge),
    \item $B \geq K$ and $A \ngeq K$.
\end{itemize}

We therefore obtain the following count of the number of $0$'s in the reduced context:

\[\sum_{d=1}^n \qbinom{n}{d}{p} \qbinom{d}{d-1}{p} (a_{d,p} - a_{d-1,p}) = \sum_{d=1}^n \qbinom{n}{d}{p} \qbinom{d}{1}{p} (a_{d,p} - a_{d-1,p})\]

and thus the total density is

\[\delta(n,p) := 1 - \frac{\sum_{d=1}^n \qbinom{n}{d}{p} \qbinom{d}{1}{p} (a_{d,p} - a_{d-1,p})}{\qbinom{n}{1}{p}(a_{n,p}-1)a_{n-1,p}}\]

\begin{proposition}
    For every prime number $p$, $\delta(2,p) = 1/2$.
\end{proposition}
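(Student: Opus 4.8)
The plan is to substitute $n = 2$ directly into the formula
\[\delta(2,p) = 1 - \frac{\sum_{d=1}^2 \qbinom{2}{d}{p} \qbinom{d}{1}{p} (a_{d,p} - a_{d-1,p})}{\qbinom{2}{1}{p}(a_{2,p}-1)a_{1,p}}\]
and evaluate every factor in closed form. First I would record the needed small values: $\qbinom{1}{1}{p} = \qbinom{2}{2}{p} = 1$ and $\qbinom{2}{1}{p} = \frac{p^2-1}{p-1} = p+1$, and from the enumeration of subspaces of $\mathbb{F}_p^k$, $a_{0,p} = 1$, $a_{1,p} = 2$, and $a_{2,p} = 1 + (p+1) + 1 = p+3$.

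Next I would expand the numerator term by term. The $d = 1$ term is $\qbinom{2}{1}{p}\qbinom{1}{1}{p}(a_{1,p} - a_{0,p}) = (p+1)\cdot 1\cdot 1 = p+1$, and the $d = 2$ term is $\qbinom{2}{2}{p}\qbinom{2}{1}{p}(a_{2,p} - a_{1,p}) = 1\cdot(p+1)\cdot(p+1) = (p+1)^2$. Hence the numerator equals $(p+1) + (p+1)^2 = (p+1)(p+2)$.

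Then I would evaluate the denominator: $\qbinom{2}{1}{p}(a_{2,p}-1)a_{1,p} = (p+1)\cdot(p+2)\cdot 2 = 2(p+1)(p+2)$. Since this is exactly twice the numerator, the fraction equals $1/2$ for every prime $p$, giving $\delta(2,p) = 1 - 1/2 = 1/2$.

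Every step here is an elementary substitution, so there is no genuine obstacle; the only point requiring care is getting $a_{2,p} = p+3$ correct, or equivalently noticing that the factor $1 + \qbinom{2}{1}{p} = p+2$ emerging from the numerator matches $a_{2,p} - 1$, which is precisely what forces the ratio to collapse to the constant $1/2$ independently of $p$.
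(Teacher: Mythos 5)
Your proposal is correct and follows essentially the same route as the paper: direct substitution of $n=2$ into the density formula, evaluation of the small $p$-binomial coefficients and $a_{0,p}=1$, $a_{1,p}=2$, $a_{2,p}=p+3$, and cancellation of the common factor $(p+1)(p+2)$ against the denominator $2(p+1)(p+2)$. All the arithmetic checks out.
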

\begin{proof}
    To start, we substitute $n=2$ in the formula for total density to get
     \[\delta(2,p) := 1 - \frac{\sum_{d=1}^2 \qbinom{2}{d}{p} \qbinom{d}{1}{2} (a_{d,p} - a_{d-1,p})}{\qbinom{n}{1}{2}(a_{2,p}-1)a_{2-1,p}}.\]
     This becomes
     \[\delta(2,p) := 1 - \frac{ \qbinom{2}{1}{p} \qbinom{1}{1}{p} (a_{1,p} - a_{0,p})+\qbinom{2}{2}{p} \qbinom{2}{1}{p} (a_{2,p} - a_{1,p})}{\qbinom{n}{1}{2}(a_{2,p}-1)a_{2-1,p}}.\]
     By direct computation, this is
    \[\delta(2,p) := 1 - \frac{ (1+p)(1)(2-1)+(1)(1+p)(3+p-2)}{(1+p)(3+p-1)(2)}\]
    which simplifies to
    \[\delta(2,p) := 1 - \frac{ (1+p)+(1+p)^2}{(1+p)(2+p)(2)}=1-\frac{2+p}{(2+p)(2)}=1/2.\qedhere\]
\end{proof}

\begin{lemma}\label{lem:power inequality}
Let $m$ and $n$ be real numbers with $m \geq 2$ and $n \geq 3$. Then
\[m^{(n^2-1)/4} - 1 \geq m^{(n^2-3)/4}.\]
\end{lemma}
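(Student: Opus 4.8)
The plan is to reduce the claimed inequality $m^{(n^2-1)/4} - 1 \geq m^{(n^2-3)/4}$ to a clean multiplicative statement and then verify that statement using only the hypotheses $m \geq 2$ and $n \geq 3$. Dividing both sides by $m^{(n^2-3)/4}$ (which is positive), the inequality is equivalent to
\[m^{1/2} - m^{-(n^2-3)/4} \geq 1,\]
and since $m^{-(n^2-3)/4} > 0$ it would suffice to prove the slightly stronger-looking but more convenient bound $m^{1/2} \geq 1 + m^{-(n^2-3)/4}$. The right side is largest when the exponent $(n^2-3)/4$ is smallest, i.e.\ when $n = 3$, giving exponent $3/2$; so the worst case is $m^{1/2} \geq 1 + m^{-3/2}$, equivalently (multiplying by $m^{3/2}$) $m^2 \geq m^{3/2} + 1$.

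So the core of the argument is: first observe monotonicity in $n$ — for fixed $m \geq 2$, the function $n \mapsto m^{-(n^2-3)/4}$ is decreasing on $n \geq 3$, so it suffices to handle $n = 3$; then prove the single inequality $m^2 \geq m^{3/2} + 1$ for all real $m \geq 2$. For the latter I would set $t = m^{1/2} \geq \sqrt{2}$ and show $t^4 - t^3 - 1 \geq 0$ for $t \geq \sqrt 2$: the polynomial $f(t) = t^4 - t^3 - 1$ has $f(\sqrt 2) = 4 - 2\sqrt 2 - 1 = 3 - 2\sqrt 2 > 0$ and $f'(t) = 4t^3 - 3t^2 = t^2(4t - 3) > 0$ for $t \geq \sqrt 2$, so $f$ is increasing and hence nonnegative on $[\sqrt 2, \infty)$. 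Unwinding the substitution gives $m^2 - m^{3/2} - 1 \geq 0$, hence $m^{1/2} - m^{-3/2} \geq 1$, hence (using monotonicity in $n$) $m^{1/2} - m^{-(n^2-3)/4} \geq 1$ for all $n \geq 3$, and multiplying through by $m^{(n^2-3)/4}$ yields the claim.

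There is essentially no obstacle here; the only thing to be slightly careful about is that $m$ and $n$ are real, not integer, so I cannot induct and must argue via monotonicity and calculus as above. An alternative, perhaps cleaner, route avoiding any auxiliary polynomial: since $n \geq 3$ gives $(n^2-1)/4 \geq 2$ and $(n^2-3)/4 \leq (n^2-1)/4 - 1/2$, one has $m^{(n^2-1)/4} \geq m^2 \geq 4$ and $m^{(n^2-3)/4} \leq m^{(n^2-1)/4}/\sqrt{m} \leq m^{(n^2-1)/4}/\sqrt 2$, whence $m^{(n^2-1)/4} - m^{(n^2-3)/4} \geq m^{(n^2-1)/4}(1 - 1/\sqrt 2) \geq 4(1 - 1/\sqrt 2) > 1$; this proves the inequality with room to spare and would be my preferred write-up.
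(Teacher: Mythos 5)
Your proof is correct, but it takes a different route from the paper. The paper treats $f(m,n) = m^{(n^2-1)/4} - 1 - m^{(n^2-3)/4}$ as a smooth function of two real variables, checks positivity at the corner point $(2,3)$, and then shows that both partial derivatives $\partial f/\partial m$ and $\partial f/\partial n$ are positive on the region $m>2$, $n>3$, so that $f$ can only increase as one moves away from the corner. You instead divide through by $m^{(n^2-3)/4}$ to isolate the dependence on $n$ in a single decreasing term $m^{-(n^2-3)/4}$, reduce by monotonicity to the worst case $n=3$, and then settle the remaining one-variable inequality $m^2 \geq m^{3/2}+1$ either by the substitution $t=\sqrt{m}$ and a derivative check on $t^4-t^3-1$, or — in your preferred variant — by the direct estimate $m^{(n^2-1)/4}\bigl(1-1/\sqrt{2}\bigr) \geq 4\bigl(1-1/\sqrt{2}\bigr) = 4-2\sqrt{2} > 1$. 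Your second variant is the most elementary of the three arguments: it needs no calculus at all, only the observations that the exponents differ by exactly $1/2$ and that the smaller exponent is at least $2$. The paper's multivariable approach is more mechanical but requires computing and sign-checking two partial derivatives (and its stated corner value $3-\sqrt{2}$ should in fact be $3-2\sqrt{2}$, which is still positive, as your computation confirms). All three arguments are valid; yours trades the calculus for a sharper use of the structure of the exponents.
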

\begin{proof}
    We consider the quantity $f(m,n) := m^{(n^2-1)/4} - 1 - m^{(n^2-3)/4}$ as a smooth function of $m$ and $n$. The desired inequality equivalently states that $f(m,n) \geq 0$ when $m \geq 2$ and $n \geq 3$. It is easy to check that $f(2,3) = 3-\sqrt{2} > 0$, so it suffices to show that $\frac{\partial f}{\partial m}$ and $\frac{\partial f}{\partial n}$ are positive when $m > 2$ and $n > 3$.
    
    We compute
    \[\frac{\partial f}{\partial m} = \frac{1}{4} m^{\frac{1}{4} \left(n^2-7\right)} \left(\sqrt{m} \left(n^2-1\right)-n^2+3\right)\]
    and
    \[\frac{\partial f}{\partial n} = \frac{1}{2} \left(\sqrt{m}-1\right) n m^{\frac{1}{4} \left(n^2-3\right)} \log (m).\]
    Since $m \geq 2$ and $n > 1$, we have
    \[
        \sqrt{m} (n^2-1) - n^2 + 3 \geq \sqrt{2}(n^2 - 1) - n^2 + 3 = (\sqrt{2}-1)n^2 + (3 - \sqrt{2}) > 0.
    \]
    Since $m > 0$, this yields
    \[\frac{\partial f}{\partial m} = \frac{1}{4} m^{\frac{1}{4} \left(n^2-7\right)} \left(\sqrt{m} \left(n^2-1\right)-n^2+3\right) > 0.\]
    Next, since $m > 1$ and $n > 0$ we have
    \[\frac{\partial f}{\partial n} = \frac{1}{2} \left(\sqrt{m}-1\right) n m^{\frac{1}{4} \left(n^2-3\right)} \log (m) > 0.\qedhere\]
\end{proof}

\begin{proposition}
    Let $n > 2$ be fixed. We have
    \[\lim_{p \to \infty} \delta(n,p) = 1.\]
\end{proposition}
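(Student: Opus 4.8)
The plan is to exhibit $\delta(n,p)$ as $1$ minus a ratio of two polynomials in $p$ with positive leading coefficients, so that the limit is $1$ exactly when the numerator has strictly smaller $p$-degree than the denominator; then I would compute both degrees and verify the strict inequality by an elementary estimate. Concretely, write $\delta(n,p) = 1 - N(n,p)/D(n,p)$ with $N(n,p) = \sum_{d=1}^n \qbinom{n}{d}{p}\qbinom{d}{1}{p}(a_{d,p}-a_{d-1,p})$ and $D(n,p) = \qbinom{n}{1}{p}(a_{n,p}-1)a_{n-1,p}$. Since Gaussian binomial coefficients are polynomials in $p$ with positive leading coefficient, and each $a_{d,p}$ is a sum of such, both $N$ and $D$ are polynomials in $p$ that are positive for $p \geq 2$; hence it suffices to show $\deg_p N < \deg_p D$.

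For the degrees I would use that $\deg_p \qbinom{n}{d}{p} = d(n-d)$, that $\deg_p \qbinom{d}{1}{p} = d-1$, and that $\deg_p a_{d,p} = \lfloor d^2/4\rfloor$ (among the summands $\qbinom{d}{i}{p}$ of $a_{d,p}$ the largest degree $i(d-i)$ is attained at $i = \lfloor d/2\rfloor$). Since $\lfloor d^2/4\rfloor - \lfloor (d-1)^2/4\rfloor = \lfloor d/2\rfloor \ge 1$ for $d\ge 2$, the leading term of $a_{d,p} - a_{d-1,p}$ is that of $a_{d,p}$, so $\deg_p(a_{d,p}-a_{d-1,p}) = \lfloor d^2/4\rfloor$ for all $d\ge 1$ (the $d=1$ case being trivial, as $a_{1,p}-a_{0,p}=1$). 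Thus every summand of $N$ has positive leading coefficient and $p$-degree $g(d) := d(n-d) + (d-1) + \lfloor d^2/4\rfloor$, so no cancellation occurs and $\deg_p N = \max_{1\le d\le n} g(d)$. Likewise $\deg_p D = (n-1) + \lfloor n^2/4\rfloor + \lfloor (n-1)^2/4\rfloor$, which I would rewrite as $(n-1) + \binom{n}{2} = \tfrac{(n-1)(n+2)}{2}$ using the parity identity $\lfloor n^2/4\rfloor + \lfloor (n-1)^2/4\rfloor = \binom{n}{2}$ (checked separately for $n$ even and $n$ odd).

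It then remains to bound $\max_d g(d)$. Using $\lfloor d^2/4\rfloor \le d^2/4$ gives $g(d) \le dn - \tfrac34 d^2 + d - 1$, and as a function of the real variable $d$ the right-hand side is a downward parabola with maximum value $\tfrac{(n+1)^2}{3}-1$, attained at $d = \tfrac{2(n+1)}{3}$. Hence $\deg_p N \le \tfrac{(n+1)^2}{3}-1$, and the argument finishes by checking $\tfrac{(n+1)^2}{3}-1 < \tfrac{(n-1)(n+2)}{2}$: clearing denominators, this reads $2(n+1)^2 - 6 < 3(n-1)(n+2)$, equivalently $0 < n^2-n-2 = (n-2)(n+1)$, which holds precisely because $n > 2$. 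Therefore $\deg_p N < \deg_p D$, so $N(n,p)/D(n,p) \to 0$ and $\delta(n,p) \to 1$.

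I do not anticipate a genuine obstacle. The only points needing care are confirming that no leading-term cancellation occurs — in the sum defining $N$ (handled by positivity of the summands' leading coefficients) and in $a_{d,p}-a_{d-1,p}$ (handled by the floor comparison above) — together with the floor bookkeeping, where the identity $\lfloor n^2/4\rfloor + \lfloor (n-1)^2/4\rfloor = \binom{n}{2}$ is the convenient shortcut. One could instead avoid exact degrees and run a cruder variant using the bounds of \Cref{cor:bounds on a}, but the exact $p$-degrees are readily available and give a clean argument.
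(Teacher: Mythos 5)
Your proof is correct, but it takes a genuinely different route from the paper. The paper argues via inequalities: it invokes \Cref{cor:bounds on a} (the bounds $p^{(n^2-1)/4} \leq a_{n,p} \leq p^{(n+1.1)^2/4}$), \Cref{lem:power inequality}, and the external estimate $\qbinom{n}{d}{p} < \tfrac{111}{32}p^{d(n-d)}$, then maximizes the resulting exponent over $d$ to get the explicit bound $1-\delta(n,p) \leq \tfrac{111n}{32}p^{-1}$. You instead observe that $N$ and $D$ are honest polynomials in $p$ and compare exact degrees: $\deg_p a_{d,p} = \lfloor d^2/4\rfloor$, no leading-term cancellation in $a_{d,p}-a_{d-1,p}$ or across the sum (positivity of leading coefficients), $\deg_p D = \tfrac{(n-1)(n+2)}{2}$ via the parity identity, and $\deg_p N \leq \tfrac{(n+1)^2}{3}-1$, with the final comparison reducing to $(n-2)(n+1)>0$. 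All of these steps check out. Your version is more self-contained (no $1.1$ fudge factors, no auxiliary lemmas) and makes transparent exactly where $n>2$ enters; it is also consistent with the boundary case, since at $n=2$ your two degree bounds coincide (both equal $2$), matching $\delta(2,p)=1/2$. What the paper's cruder route buys is a uniform explicit constant and reuse of \Cref{cor:bounds on a}, which is needed anyway for the companion limit $n \to \infty$ where the polynomial-degree argument does not apply.
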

\begin{proof}
   We will show that $1-\delta$ approaches $0$ for a fixed $n$ as $p$ approaches $\infty$. Since $p$ is prime and $n \geq 3$, we have
    \begin{multline*}
        1 - \delta(n,p)
        = \frac{\sum_{d=1}^n \qbinom{n}{d}{p} \qbinom{d}{1}{p} (a_{d,p} - a_{d-1,p})}{\qbinom{n}{1}{p}(a_{n,p}-1)a_{n-1,p}}
        \leq \frac{\sum_{d=1}^n \qbinom{n}{d}{p} \qbinom{d}{1}{p} a_{d,p}}{\qbinom{n}{1}{p}(a_{n,p}-1)a_{n-1,p}} \\
        \leq \frac{\sum_{d=1}^n \qbinom{n}{d}{p} p^d p^{(d+1.1)^2/4}}{p^{n-1} (p^{(n^2-1)/4}-1) p^{((n-1)^2-1)/4}} \\
        \leq \frac{\sum_{d=1}^n \qbinom{n}{d}{p} p^d p^{(d+1.1)^2/4}}{p^{n-1} p^{(n^2-3)/4} p^{((n-1)^2-1)/4}}\\
        = p^{-\frac{1}{4} \left(2 n^2+2 n-7\right)} \sum_{d=1}^n \qbinom{n}{d}{p} p^{d + (d+1.1)^2/4},
    \end{multline*}
    using \Cref{cor:bounds on a}, \Cref{lem:power inequality}, and the estimates
    \[p^{n-1} \leq \underbrace{p^{n-1} + p^{n-2} + \dots + 1}_{=\qbinom{n}{1}{p}} \leq p^n.\]
    By \cite[Lemma 2.1]{IhringerThesis}, we also have
    \[\qbinom{n}{d}{p} < \frac{111}{32} p^{d(n-d)},\]
    so
    \[1 - \delta(n,p) \leq \frac{111}{32} p^{-\frac{1}{4} \left(2 n^2+2 n-7\right)} \sum_{d=1}^n p^{d(n-d) + d + (d+1.1)^2/4}.\]
    For fixed $n$, the expression $d(n-d) + d + (d+1.1)^2/4$ is maximized at $d = 31/30 + 2n/3$. Thus,
    \[1 - \delta(n,p) \leq \frac{111}{32} p^{-\frac{1}{4} \left(2 n^2+2 n-7\right)} \sum_{d=1}^n p^{31/30 + 2n/3} = \frac{111n}{32} p^{-\frac{1}{4} \left(2 n^2+2 n-7\right) + 31/30 + 2n/3}.\]
    The exponent $-\frac{1}{4} \left(2 n^2+2 n-7\right) + 31/30 + 2n/3 = 167/60 + n/6 - n^2/2$ is decreasing in $n$ for $n > 1/6$, and equals $-73/60$ for $n = 3$, so we obtain
    \[p^{-\frac{1}{4} \left(2 n^2+2 n-7\right) + 31/30 + 2n/3} = p^{167/60 + n/6 - n^2/2} \leq p^{-73/60} \leq p^{-1}.\]
    Now
    \[1 - \delta(n,p) \leq \frac{111n}{32} p^{-1}\]
    approaches $0$ as $p \to \infty$.
\end{proof}

\begin{proposition}
    For fixed $p$, we have
    \[\lim_{n \to \infty} \delta(n,p) = 1.\]
\end{proposition}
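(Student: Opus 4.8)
The plan is to bound $1 - \delta(n,p)$ from above by an expression that visibly tends to $0$ as $n \to \infty$, mirroring the structure of the proof of the previous proposition but now treating $p$ as a constant and letting $n$ grow. First I would recall the explicit formula
\[
1 - \delta(n,p) = \frac{\sum_{d=1}^n \qbinom{n}{d}{p} \qbinom{d}{1}{p} (a_{d,p} - a_{d-1,p})}{\qbinom{n}{1}{p}(a_{n,p}-1)a_{n-1,p}},
\]
and crudely bound the numerator by dropping the $-a_{d-1,p}$ term and using $\qbinom{d}{1}{p} \leq p^d$ and the upper bound $a_{d,p} \leq p^{(d+1.1)^2/4}$ from \Cref{cor:bounds on a}. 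For the denominator I would use $\qbinom{n}{1}{p} \geq p^{n-1}$, the lower bound $a_{n,p} - 1 \geq p^{(n^2-3)/4}$ from \Cref{lem:power inequality} (valid for $n \geq 3$), and $a_{n-1,p} \geq p^{((n-1)^2-1)/4}$.

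This reduces the problem to showing that
\[
p^{-\frac14(2n^2+2n-7)} \sum_{d=1}^n \qbinom{n}{d}{p}\, p^{d + (d+1.1)^2/4} \longrightarrow 0
\]
as $n \to \infty$. Here I would apply Ihringer's bound $\qbinom{n}{d}{p} < \tfrac{111}{32} p^{d(n-d)}$, so each summand is at most $\tfrac{111}{32} p^{d(n-d) + d + (d+1.1)^2/4}$. As computed in the previous proof, the exponent $d(n-d) + d + (d+1.1)^2/4$ is maximized (as a function of the real variable $d$) at $d = 31/30 + 2n/3$, giving a maximal value that is quadratic in $n$ with leading term $\tfrac{2n^2}{9} + \tfrac{n^2}{9} = \tfrac{n^2}{3}$ — more precisely one evaluates it to be $\tfrac{n^2}{3} + O(n)$. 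Summing the $n$ terms contributes only a factor of $n$. Thus
\[
1 - \delta(n,p) \leq \frac{111n}{32}\, p^{-\frac14(2n^2+2n-7) + \frac{n^2}{3} + O(n)} = \frac{111n}{32}\, p^{-\frac{n^2}{6} + O(n)},
\]
and since $p \geq 2$ is fixed, the exponent $-\tfrac{n^2}{6} + O(n) \to -\infty$, so the right-hand side tends to $0$.

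The main obstacle is purely bookkeeping: carefully tracking the constant and linear terms in the exponent to confirm that the coefficient of $n^2$ in the final exponent is strictly negative (it should come out to $-\tfrac14 \cdot 2 + \tfrac13 = -\tfrac16 < 0$, with the $-\tfrac14(2n^2)$ from the denominator dominating the $+\tfrac{n^2}{3}$ from the numerator's worst term). One should double-check the evaluation of $d(n-d) + d + (d+1.1)^2/4$ at $d = 31/30 + 2n/3$: the $d(n-d)$ term contributes $-\tfrac19 n^2 + \cdots$ wait — rather $d(n-d) = dn - d^2$, and with $d = \tfrac{2n}{3} + c$ this is $\tfrac{2n^2}{3} - \tfrac{4n^2}{9} + O(n) = \tfrac{2n^2}{9} + O(n)$, while $(d+1.1)^2/4 = \tfrac{(2n/3)^2}{4} + O(n) = \tfrac{n^2}{9} + O(n)$, summing to $\tfrac{n^2}{3} + O(n)$ as claimed. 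With that verified, combining with the $-\tfrac{n^2}{2}$ from $-\tfrac14(2n^2)$ in the prefactor gives leading exponent $-\tfrac16 n^2$, and the limit follows. No genuinely new idea beyond the previous proposition is required; the quadratic-in-$n$ savings in the denominator simply overwhelms the quadratic growth in the numerator.
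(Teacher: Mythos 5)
Your proposal is correct and follows essentially the same route as the paper's proof: bound the numerator via \Cref{cor:bounds on a} and Ihringer's estimate $\qbinom{n}{d}{p} < \frac{111}{32}p^{d(n-d)}$, bound the denominator below, maximize the resulting exponent at $d = 2n/3 + 31/30$, and conclude that the leading exponent is $-n^2/6$. The only (immaterial) difference is that you invoke \Cref{lem:power inequality} to handle the $a_{n,p}-1$ term, whereas the paper uses the ad hoc bound $p^{(n^2-1)/4}-1 \geq p^{(n^2-2)/4}$ for large $n$; both yield the same leading behavior.
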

\begin{proof}
    We will equivalently show that $1-\delta$ approaches $0$. We have
    \begin{multline*}
        1 - \delta(n,p)
        = \frac{\sum_{d=1}^n \qbinom{n}{d}{p} \qbinom{d}{1}{p} (a_{d,p} - a_{d-1,p})}{\qbinom{n}{1}{p}(a_{n,p}-1)a_{n-1,p}}
        \leq \frac{\sum_{d=1}^n \qbinom{n}{d}{p} \qbinom{d}{1}{p} a_{d,p}}{\qbinom{n}{1}{p}(a_{n,p}-1)a_{n-1,p}} \\
        \leq \frac{\sum_{d=1}^n \qbinom{n}{d}{p} p^d p^{(d+1.1)^2/4}}{p^{n-1} (p^{(n^2-1)/4}-1) p^{((n-1)^2-1)/4}}
    \end{multline*}
    using the upper and lower bounds from \cref{cor:bounds on a} together with the estimates
    \[p^{n-1} \leq \underbrace{p^{n-1} + p^{n-2} + \dots + 1}_{=\qbinom{n}{1}{p}} \leq p^n.\]
    For sufficiently large $n$, we have $p^{(n^2-1)/4}-1 \geq p^{(n^2-2)/4}$, so we eventually have
    \[1 - \delta(n,p) \leq \frac{\sum_{d=1}^n \qbinom{n}{d}{p} p^{(d^2+6.2d+1.21)/4}}{p^{(n^2+n-3)/2}}.\]
    By \cite[Lemma 2.1]{IhringerThesis}, we also have
    \[\qbinom{n}{d}{p} < \frac{111}{32} p^{d(n-d)},\]
    so (for sufficiently large $n$),
    \[1 - \delta(n,p) \leq \frac{111}{32} p^{1.8025} \frac{\sum_{d=1}^n p^{nd-0.75d^2+1.55d}}{p^{(n^2+n)/2}}.\]
    For fixed $n$, the expression $nd - 0.75d^2 + 1.55d$ is maximized at $d = 2n/3 + 31/30$. Thus, we obtain (for $n \gg 0$)
    \begin{multline*}
        1 - \delta(n,p) 
        \leq \frac{111}{32} p^{1.8025} \frac{n p^{n(31/30+2n/3)-0.75(31/30+2n/3)^2+1.55(31/30+2n/3)}}{p^{(n^2+n)/2}} \\
        = \frac{111}{32} n p^{1.8025} \frac{p^{n^2/3+31n/30+961/1200}}{p^{(n^2+n)/2}} \\
        = \frac{111}{32} n p^{-n^2/6+8n/15+781/300}
    \end{multline*}
    which approaches $0$ as $n \to \infty$.
\end{proof}

\printbibliography
\end{document}